\newcommand{\Z}{\hbox{$\mathbb Z$}}
\newcommand{\R}{\hbox{$\mathbb R$}}
\DeclareMathOperator{\sgn}{sgn}
\DeclareMathOperator{\cone}{cone}
\DeclareMathOperator{\conv}{conv}
\DeclareMathOperator{\comp}{Comp}
\DeclareMathOperator{\bicomp}{BiComp}
\DeclareMathOperator{\coloneqq}{:=}
\newcommand{\PG}{\mathcal{P}_G}
\theoremstyle{definition}
\newtheorem{construction}{Construction}[section]
\newtheorem{observation}[construction]{Observation}
\newtheorem{definition}[construction]{Definition}
\newtheorem{proposition}[construction]{Proposition}
\newtheorem{example}[construction]{Example}
\newtheorem{lemma}[construction]{Lemma}
\newtheorem{remark}[construction]{Remark}
\newtheorem{theorem}[construction]{Theorem}
\newtheorem{corollary}[construction]{Corollary}
\newtheorem*{spthmA}{Propositions \ref{prop:signedConeFacets} and \ref{prop:mixedConeFacets}}
\newtheorem*{spthmB}{Theorems \ref{thm:signedSerres} and \ref{thm:mixedSerres}}
\title{Serre's Properties for Quadratic Generated Domains from Graphs
}
\author{Drew J. Lipman}
\address{Dept of Mathematical Sciences, Clemson University}
              \email{djlipma@clemson.edu}
\author{Michael A. Burr}
\address{Dept of Mathematical Sciences, Clemson University}
\email{burr2@clemson.edu}
\thanks{Partially supported by a grant from the Simons Foundation (\#282399 to Michael Burr) and NSF Grant CCF-1527193.}
\date{\today}
\begin{document}
\begin{abstract}
For any graph, one can construct a ring, called the edge ring, which is a quadratic-monomial generated subring of the Laurent polynomial ring $k[x_1^{\pm 1},\dots,x_n^{\pm 1}]$.  In fact, every quadratic\\ -monomial generated subring of this Laurent polynomial ring can be generated as an edge ring for some graph.  The combinatorial structure of the graph has been successfully applied to identify and classify many important commutative algebraic properties of the corresponding edge ring.  In this paper, we classify Serre's $R_1$ condition for all quadratic-monomial generated subrings of $k[x_1^{\pm 1},\dots,x_n^{\pm 1}]$.  Moreover, we provide a minimal example of a graph whose corresponding edge ring is not Cohen-Macaulay.  This paper extends the work of Hibi and Ohsugi from the setting subrings of polynomial rings to subrings of Laurent polynomial rings.
\end{abstract}

\maketitle

\keywordsname:~Edge Rings, Serre's $R_1$ Condition, Cohen-Macaulay, Quadratic-monomial Generated Domains

\section{Introduction}
\label{sec:Intro}
Hibi and Ohsugi \cite{OhsugiHibi:1998} and Simis, Vasconcelos, and Villarreal \cite{Villarreal:1998} independently give a construction of a ring $k[G]$, called the {\em edge ring}\footnote{The edge ring is a different object than the edge ideal of a graph.  The edge ring and edge ideal are generated by the same elements, but one as a subring and the other as an ideal of $k[x_{1},\dots,x_n]$.  For more details on the edge ideal, see, for example, \cite{Villarreal:1994,Sullivant:2008} and the references included therein.  It is also distinct from the edge algebra (sometimes also called the edge ring), see, for example \cite{Estrada:2000,GitlerValencia:2005} and the references included therein.  There is another distinct concept called the edge ring, see, for example \cite{Zalavsky:2009}.}, from a graph $G$.  In their work, $k[G]$ is a quadratic-monomial generated subring of the polynomial ring $k[x_1,\dots,x_n]$ over a field $k$, where each edge of $G$ corresponds to a generator of $k[G]$.  In \cite{LipmanBurr:2016}, we generalized this construction to all quadratic-monomial generated subrings of the the Laurent polynomial ring $k[x_1^{\pm 1},\dots,x_n^{\pm 1}]$.

Since the edge ring is constructed from combinatorial data, the the edge ring has proved to be a fruitful construction for studying commutative algebraic properties, see, e.g., \cite{HibiMatsudaOhsugi:2016,HibiNishiyamaOhsugiShikama:2014,HibiKatthan:2014,HibiHigashitaniKimura:2014,TatakisThoma:2013,Matsui:2003,GitlerVillareal:2011,HerzogHibiZheng:2004,OhsugiHibi:2000,DAli:2015,HibiOhsugi:2006,Villarreal:2005,HibiMoriOhsugiShikama:2016,BermejoGarciaReyes:2015,BermejoGimenezSimis:2009,OhsugiHibi:2017,HibiKattan:2014,OhsugiHibi:1998,Villarreal:1998,LipmanBurr:2016}.  These papers use the interplay between the combinatorics and commutative algebra to classify commutative algebraic properties for edge rings and use edge rings to easily construct rings which are examples and non-examples for these properties.  For example, in \cite{OhsugiHibi:1998} and \cite{Villarreal:1998}, the authors gave a combinatorial characterization, called the odd cycle condition, of the normality of $k[G]$ in terms of $G$.  Moreover, when $k[G]$ is not a normal domain, they use the combinatorial data of $G$ to construct the normalization of $k[G]$.  In \cite{LipmanBurr:2016}, we extended their work to completely classify the normality of all quadratic-monomial generated subrings of the Laurent polynomial ring.  The current paper continues this generalization by extending the work in \cite{HibiKattan:2014} to all quadratic-monomial generated Laurent polynomial rings.

In commutative algebra, the $R_\ell$ and $S_\ell$ conditions for a ring characterize many important properties, such as normality and Cohen-Macaulayness, see Section \ref{sec:SerreConditions} and \cite{Grothendieck:1965,BrunsHerzog-CohenMacaulay,Serre:2000} for details.  In \cite{Vitulli:2009}, Vitulli characterizes the $R_\ell$ condition for semigroup rings, and, in \cite{HibiKattan:2014}, Hibi and Katth\"an use this characterization to characterize the $R_1$ condition for edge rings.  In this paper, we generalize their work and provide a complete characterization of the $R_1$ condition for all quadratic-monomial generated domains in the Laurent polynomial ring $k[x_1^{\pm 1},\dots,x_n^{\pm 1}]$.  This case is considerably more complicated than the situation in the previous work because the negative powers allow exponents to cancel.  In addition, the conditions in \cite{HibiKattan:2014} do not appear well-suited to generalizations; therefore, we reinterpret and these conditions in a way that can be suitably generalized.  In order to address these difficulties, we introduce new proofs; in particular, our proofs are more geometric and combinatorial in nature than in the previous work.  In addition, our generalization provides new examples exhibiting a graph whose edge ring satisfies $R_1$ and fails $S_2$.

\subsection{Main Results}

Suppose that $R$ is a quadratic-monomial generated subring of $k[x_1^{\pm 1},\dots,x_n^{\pm 1}]$.  From $R$, we construct a mixed signed, directed graph $G$, i.e., the edges of $G$ are either directed or signed such that the edge ring $k[G]$ associated to $G$ equals $R$.  For each edge $e\in G$, assign a point $\rho(e)\in\mathbb{R}^n$ and define $\PG$ to be the convex hull of these points.  Our first result characterizes the subgraphs of $G$ which correspond to facets of $\cone(\PG)$.  In particular, in Observation \ref{thm:dimFormula} and Proposition \ref{prop:mixedConeDimension}, we prove that the codimension of $\cone(\PG)$ is the number of bipartite components of $G$.

Next, we define facet subgraphs in Definitions \ref{def:facetsubgraphdirected} and \ref{def:facetsubgraphsigned} and prove that 

\begin{spthmA}
Let $G$ be a signed graph.  $F$ is a facet of $\cone(\PG)$ if and only if there exists a facet subgraph $H$ such that $\cone(\mathcal{P}_H)=F$.
\end{spthmA}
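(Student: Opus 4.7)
The plan is to prove both directions of the equivalence by translating between supporting hyperplanes of $\cone(\PG)$ and combinatorial data on $G$, and to handle the signed and mixed cases in parallel since the only real difference lies in the local analysis of each edge type. Throughout, I would lean on the dimension formula from Observation \ref{thm:dimFormula} and Proposition \ref{prop:mixedConeDimension} (codimension equals the number of bipartite components) to convert statements about dimensions of faces into statements about bipartite-component counts of subgraphs.

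For the forward direction, suppose $F$ is a facet of $\cone(\PG)$. Then $F$ is cut out by a supporting hyperplane $H_\alpha = \{x \in \mathbb{R}^n : \langle \alpha, x\rangle = 0\}$ with $\langle \alpha, \rho(e)\rangle \ge 0$ for every edge $e \in G$, and $F = \cone(\PG) \cap H_\alpha$. I would define $H$ to be the subgraph of $G$ whose edges are exactly those $e$ with $\langle \alpha, \rho(e)\rangle = 0$, so that $\cone(\mathcal{P}_H) \subseteq F$ by construction. Reading the vector $\alpha$ as an assignment of weights to the vertices of $G$, the equations and inequalities $\langle \alpha, \rho(e)\rangle = 0$ or $>0$ translate locally into combinatorial conditions on each edge: for a signed edge the two endpoint weights must match up in sign, and for a directed edge the tail weight must dominate the head weight. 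These local constraints should be precisely what the definition of facet subgraph (Definitions \ref{def:facetsubgraphdirected} and \ref{def:facetsubgraphsigned}) encodes, so $H$ is a facet subgraph. Equality $\cone(\mathcal{P}_H) = F$ then follows from a dimension count: since $F$ is a facet, $\dim F = \dim \cone(\PG) - 1$, and by the codimension formula this forces the number of bipartite components of $H$ to exceed that of $G$ by exactly one, which in turn forces $\cone(\mathcal{P}_H)$ to fill out $F$.

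For the reverse direction, given a facet subgraph $H$, I would manufacture an explicit vertex weighting $\alpha$ from the combinatorial data of $H$: on each bipartite component of $H$, assign $\pm 1$ weights according to the bipartition; on non-bipartite components, assign $0$; and adjust along directed edges so that the tail-minus-head differences carry the required sign. An edge-by-edge check then gives $\langle \alpha, \rho(e)\rangle \ge 0$ for all $e \in G$ with equality exactly on the edges of $H$, so $H_\alpha$ is a supporting hyperplane and $\cone(\mathcal{P}_H) = \cone(\PG) \cap H_\alpha$ is a face. To upgrade ``face'' to ``facet,'' I would again invoke the codimension formula: the defining axioms of a facet subgraph guarantee that $H$ has exactly one more bipartite component than $G$, so $\dim \cone(\mathcal{P}_H) = \dim \cone(\PG) - 1$.

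The main obstacle is the case analysis caused by mixing signed and directed edges. The construction of $\alpha$ in the reverse direction has to simultaneously (a) respect the bipartition of each bipartite component of $H$, (b) respect orientation consistency along any directed paths and cycles, and (c) remain non-negative across every edge of $G \setminus H$ regardless of its type. Similarly, in the forward direction, extracting a facet-subgraph structure from an abstract $\alpha$ requires verifying that the induced combinatorial data is consistent globally, not just edge-by-edge. Managing this bookkeeping, and keeping the bipartite-component count in sync with the dimension drop at every step, is where the technical substance of the argument lies.
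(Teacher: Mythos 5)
Your overall strategy---reading supporting forms as vertex weights and converting dimension drops into bipartite-component counts via Observation \ref{thm:dimFormula}---is the same as the paper's, and your reverse direction (facet subgraph $\Rightarrow$ facet) is essentially right: the paper uses only the single dual vector $e_L^\ast-e_R^\ast$ attached to the one new bipartite component $H'$ (with weight $0$ everywhere else), and your weighting of \emph{every} bipartite component of $H$ differs from that only by forms that already vanish on all of $\PG$, so either choice works once one checks, as you indicate, that every edge of $G\setminus H$ evaluates to $1$ or $2$. The genuine gap is in your forward direction, at the sentence ``these local constraints should be precisely what the definition of facet subgraph encodes.'' Condition (2) of Definition \ref{def:facetsubgraphsigned} is not a local, edge-by-edge condition on the weights $\alpha$: it asserts that every edge of $G\setminus H$ has a prescribed incidence with a particular bipartition $L\cup R$ of the new bipartite component $H'$. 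An arbitrary supporting form $\alpha$ assigns weights to all vertices, including those lying in components of $H$ that carry no $L/R$ labels at all, and the inequality $\langle\alpha,\rho(e)\rangle>0$ for $e\in G\setminus H$ says nothing about $L$ and $R$ until $\alpha$ has been related to $e_L^\ast-e_R^\ast$.

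That relation is the missing step, and it is where the paper's proof does its real work: one shows that the hyperplanes containing $\cone(\mathcal{P}_H)$ are spanned by the bipartite characteristic vectors of $G$ (Lemma \ref{lem:BipHyper}) together with $e_L^\ast-e_R^\ast$; that the latter is independent of the former (this uses an edge of $G\setminus H$ incident to $H'$, and separately rules out the possibility that such an edge joins $L$ to $R$, since then $(\alpha)_i=-(\alpha)_j$ would force $\langle\alpha,\rho(e)\rangle=0$ and put $e$ back into $H$); and hence, by a dimension count, that $\alpha$ equals a positive multiple of $e_L^\ast-e_R^\ast$ plus forms vanishing on all of $\PG$. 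Only after this decomposition does $\langle\alpha,\rho(e)\rangle>0$ become equivalent to $\langle e_L^\ast-e_R^\ast,\rho(e)\rangle>0$, and only then does the case analysis produce condition (2); the same decomposition also yields the nontrivial fact that every edge of $G\setminus H$ is incident to $H'$, which your sketch never establishes. Calling this ``bookkeeping'' undersells it---without the decomposition of $\alpha$ the forward direction does not go through. (Two small slips: for a directed edge $\rho(i,j)=e_j-e_i$, so nonnegativity means the head weight dominates the tail, not the reverse; and the equality $F=\cone(\mathcal{P}_H)$ is automatic for a face of a finitely generated cone, while the dimension count is what gives $\bicomp(H)=\bicomp(G)+1$, i.e.\ condition (1).)
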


Next, we use \cite[Proposition~3.2]{HibiKattan:2014}, see Proposition \ref{prop:vitulli1} for a special case, to characterize whether $R$ satisfies $R_1$ in terms of the facets of the cone of $\PG$.  Using this result, we develop a combinatorial condition on whether $k[G]$ satisfies $R_1$.

\begin{spthmB}
Let $G$ be a graph.
$k[G]$ satisfies Serre's $R_1$ condition if and only if for every facet subgraph $H$ of $G$, $H$ has at most one more component than $G$.
\end{spthmB}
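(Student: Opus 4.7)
The plan is to combine Proposition~\ref{prop:vitulli1} with Propositions~\ref{prop:signedConeFacets} and~\ref{prop:mixedConeFacets}, converting the $R_1$ question into a lattice-saturation question at each facet subgraph, and then to translate the latter into the component-count condition via the dimension formula from Observation~\ref{thm:dimFormula} and Proposition~\ref{prop:mixedConeDimension}. Proposition~\ref{prop:vitulli1} says that $k[G]$ satisfies $R_1$ precisely when, for each facet $F$ of $\cone(\PG)$, the lattice generated by those $\rho(e)$ lying in $F$ equals the intersection of $\operatorname{aff}(F)$ with the ambient lattice of $\cone(\PG)$. Since every facet has the form $\cone(\mathcal{P}_H)$ for a facet subgraph $H$, this reduces the problem to deciding, for each facet subgraph $H$, whether $\mathbb{Z}\langle \rho(e) : e \in E(H)\rangle$ saturates inside the lattice of $G$ restricted to the facet.

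For the translation I would use a dimension-counting lemma. By the dimension formula, the rank of $\mathbb{Z}\langle \rho(e) : e \in E(H)\rangle$ equals the vertex count minus the number of bipartite (or balanced, in the mixed case) components of $H$. Since $H$ is a facet subgraph, this rank drops by exactly one compared with that of $G$, so the number of bipartite components of $H$ exceeds that of $G$ by exactly one. A component-by-component analysis then shows that this single rank drop arises in a single $G$-component in one of the following ways: either that component remains connected in $H$ but becomes bipartite, or it splits in $H$ into exactly one bipartite piece together with some number (possibly zero) of non-bipartite pieces. The first two possibilities give $c(H)\le c(G)+1$; the remaining possibility, with $k\ge 1$ extra non-bipartite pieces produced by the split, gives $c(H)=c(G)+1+k>c(G)+1$.

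The core step is to show that the saturation condition at $\cone(\mathcal{P}_H)$ holds if and only if we avoid this last possibility, i.e., if and only if $c(H)\le c(G)+1$. For the ``if'' direction I would argue component by component: the lattice of $G$ intersected with the affine hull of the facet is generated by the incidence vectors of the unaltered components together with those on the single altered component, and each such generator is visibly an integer combination of $\rho(e)$ for $e\in E(H)$. For the ``only if'' direction, when the altered component in the split case carries an additional non-bipartite piece, I would construct an explicit lattice point on $\operatorname{aff}(\cone(\mathcal{P}_H))$ that is realizable in the $G$-lattice but not in the $\mathbb{Z}$-span of $\rho(E(H))$, combining the bipartition vector on the new bipartite piece with an odd-cycle witness supplied by the extra non-bipartite piece; this produces a nontrivial lattice index and forces $R_1$ to fail at that facet.

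The main obstacle is the bookkeeping for signed and mixed signed/directed graphs in Theorem~\ref{thm:mixedSerres}, where ``bipartite'' must be replaced by ``balanced'' and sign or orientation cancellations complicate the incidence-vector arguments. I expect the signed case of Theorem~\ref{thm:signedSerres} to follow the outline above with only minor bookkeeping changes, while the mixed case will require an additional replacement lemma that lets one exchange incidence contributions from signed and directed edges without changing the integer span, then applied component by component to reduce to the purely signed situation.
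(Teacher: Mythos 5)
Your overall skeleton matches the paper's: reduce $R_1$ via Proposition~\ref{prop:vitulli1} to a lattice-saturation condition at each facet, use Propositions~\ref{prop:signedConeFacets} and~\ref{prop:mixedConeFacets} to index facets by facet subgraphs, and then analyze the altered component case by case (this is Observation~\ref{obs:bipartitecount}). However, there is a genuine gap at the core step. Everything hinges on an explicit description of the lattice $\Z\rho(E(G))$ for a connected signed graph, namely Lemma~\ref{lem:signedLattice}: for non-bipartite $G$ it is exactly the integer vectors with \emph{even} coordinate sum, and for bipartite $G=L\cup R$ exactly those with $\sum_{L}a_i=\sum_{R}a_j$. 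You never state or prove such a characterization, and without it the ``if'' direction is only the assertion that the generators are ``visibly'' integer combinations of $\rho(e)$ --- which is precisely what needs proof in the signed setting, where the content is that the balance condition on the new bipartite piece $H_1$ together with the parity (or balance) condition on $G_1$ forces the correct parity (or balance) condition on the complementary piece $H_2$.

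More seriously, your witness for the ``only if'' direction appears to be wrong as described. You propose a lattice point ``combining the bipartition vector on the new bipartite piece with an odd-cycle witness supplied by the extra non-bipartite piece.'' But any element of $\Z\rho(E(H))$-type lying in the supporting hyperplane must be balanced on $H_1$, hence has \emph{even} coordinate sum there; so a vector supported on $H_1$ plus a single non-bipartite piece with odd sum on that piece would have odd total sum and thus fail to lie in $\Z\rho(E(G_1))$ at all. A valid witness must have odd coordinate sum on \emph{two} distinct non-bipartite components of $H$: the paper takes $e_i+e_j$ with $i\in H_2$, $j\in H_3$ for two non-bipartite pieces, which lies in $\Z\rho(E(G))$ (even total sum, $G_1$ non-bipartite), lies in the hyperplane (it avoids $L\cup R$), but is not in $\bigoplus\Z\rho(E(H_m))$ since each summand would need even sum. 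Note this is exactly why the threshold is $\comp(G)+1$: the obstruction requires two non-bipartite pieces. Two smaller omissions: the first two conditions of Proposition~\ref{prop:vitulli1} (integrality of the support form and a lattice point with value $1$, which sometimes forces you to use $\frac12(e_L^\ast-e_R^\ast)$) are not addressed; and for the mixed case the paper does not use a ``replacement lemma'' but passes to the augmented signed graph $\widetilde{G}$ and transfers lattice membership via Corollary~\ref{cor:FF:Equality} with zero weights on artificial vertices --- your proposed alternative is plausible but entirely undeveloped.
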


We use this characterization, along with the condition of normality from \cite{LipmanBurr:2016}, in order to provide an example of a graph whose edge ring satisfies $R_1$, but not $S_2$.  This provides an example of an edge ring which is not Cohen-Macaulay.

\subsection{Outline of Paper}

The remainder of this paper is organized as follows:  In Section \ref{sec:BackAndNote}, we provide the background and notation used throughout the paper.  In Section \ref{sec:Geometry}, we compute the dimension of the cone $\cone(\PG)$ for the signed graph $G$, define facet subgraphs, and classify which graphs $G$ satisfy Serre's $R_1$ condition in terms of combinatorial data.  In Section \ref{sec:mixedSerres}, we extend the results of Sections \ref{sec:Geometry} to all quadratic-monomial generated domains.  Finally, we conclude in Section \ref{sec:Conclusion}.

\section{Background and Notation}
\label{sec:BackAndNote}
In this section, we recall notation, definitions, and results from graph theory, semigroup theory, and Serre's conditions for use in this paper.  Our notation for edge rings follows the notation of Hibi and Ohsugi \cite{OhsugiHibi:1998}.

\subsection{Graph Theory}
\label{sec:GT}

One main object of study in this paper is a signed graph.
We give the basic definitions for a signed graph in this section.

\begin{definition}
A {\em signed graph} $(G,\sgn)$ is an undirected graph $G=(V,E)$ and a {\em sign function} $\sgn:E\rightarrow \{-1,+1\}$ where $\sgn(e)$ denoted the {\em sign} of the edge $e\in E$.  For notational convenience, an edge $ij$ with $\sgn(ij)=+1$ or $-1$ is denoted $+ij$ or $-ij$, respectively.
We omit the sign when it is understood from context.
\end{definition}

\begin{definition}
Let $G$ be a signed graph.
$H$ is a {\em component} of $G$ if it is a maximal connected subgraph.
We say a component $H$ is a {\em bipartite component} if the vertices of $H$ can be partitioned into two sets $L$ and $R$ so that all the edges of $H$ have exactly one vertex in each of $L$ and $R$.
\end{definition}

The construction of a ring from a graph proceeds by first constructing a semigroup from the graph.  We now define the map used in this construction.

\begin{definition}[cf {\cite{OhsugiHibi:1998}}]\label{def:edgepolytope}
Let $G$ be a signed graph with $n$ vertices, possibly with loops, and without multiple edges.
Define a map $\rho:E(G)\rightarrow \R^n$ as $\rho(e)=\sgn(e)(e_i + e_j)$ where $e=+ij$ or $e=-ij$ is an edge of the graph.
When $e$ is a loop, $i=j$ and $\rho(ii)=2\sgn(ii)e_i$.
Let $\rho(E(G))$ be the image of $E(G)$ and define the {\em edge polytope of $G$} as $\mathcal{P}_G\coloneqq\conv(\rho(E(G)))$.
\end{definition}

Observe that in \cite{OhsugiHibi:1998}, the authors do not consider signed graphs, and, hence, all edges $e$ in $G$ have positive sign, i.e., $\rho(e) = e_i + e_j$.

\subsection{Semigroups}\label{sec:SemiGPs}

The edge ring is constructed as a semigroup ring.  In this section, we recall the definition of affine semigroups and the edge ring.  For more details, see, e.g., \cite{BrunsHerzog-CohenMacaulay,CoxLittleSchenck-ToricVarieties}.

\begin{definition}
An {\em affine semigroup} $C$ is a finitely generated semigroup containing zero which, for some $n$, is isomorphic to a subsemigroup of $\Z^n$.
Over a field $k$, the {\em affine semigroup ring} $k[C]$ of $C$ is the ring generated by the elements of $\{x_1^{c_1}\dots x_n^{c_n}:(c_1,\dots,c_n)\in C\}$.  We often use multi-index notation for clarify, where $x^c=x_1^{c_1}\dots x_n^{c_n}$ with $c=(c_1,\dots,c_n)$.
\end{definition}

For a set $S\subseteq\Z^n$, we observe the set $\Z_+S$, i.e., the set of all positive integral linear combinations of the elements of $S$, is the smallest subsemigroup of $\Z^n$ containing $S$.  We define the cone of $S$ to be all positive combinations of the elements of $S$, i.e., $\cone(S)=\R_+S$.  Moreover, for a subsemigroup $C$ of $\Z^n$, we denote the smallest subgroup of $\Z^n$ containing $C$ by $\Z C$.  From these definitions, we now describe the edge ring of a graph $G$.

\begin{definition}
Let $G$ be a signed graph.  The {\em edge ring} of $G$ is the affine semigroup ring generated by $\rho(E(G))$, i.e.,
$$
k[G]\coloneqq k[\Z_+\rho(E(G))]=k[x^a]_{a\in \rho(E(G))}.
$$
\end{definition}

\subsection{Serre's Conditions}\label{sec:SerreConditions}

In this section, we recall Serre's conditions $R_\ell$ and $S_\ell$ for rings and some of the applications of these conditions to commutative algebra.  We also include a theorem of Hibi and Katth\"{a}n \cite[Theorem~3.2]{HibiKattan:2014}, based on a result of Vitulli \cite[Theorem~2.7]{Vitulli:2009} which characterizes the $R_1$ condition for semigroup rings.

\begin{definition}
A finitely generated module $M$ over a Noetherian ring $R$ satisfies {\em Serre's condition $S_{\ell}$} if $\mbox{depth} (M_p)\geq \min(\ell, \dim M_p)$ for all $p\in \mbox{Spec}\ R$.  A Noetherian ring $R$ satisfies {\em Serre's condition $R_{\ell}$} if $R_{\mathfrak{p}}$ is a regular local ring for all prime ideals $\mathfrak{p}$ in $R$ with $\dim R_{\mathfrak{p}} \leq \ell$
\end{definition}

Serre's conditions $R_\ell$ and $S_\ell$ characterize many interesting properties of rings, see, e.g., \cite{Grothendieck:1965,BrunsHerzog-CohenMacaulay,Serre:2000} for details.  For example, a ring is Cohen-Macaulay if and only if it satisfies $S_\ell$ for all $\ell$.  Moreover, a Noetherian ring is normal if and only if it satisfies $R_1$ and $S_2$, see, e.g., \cite[Theorem 5.8.6]{Grothendieck:1965}.  Additionally, Hochester shows that if $C$ is a normal semigroup and $k$ is a field, then the semigroup ring $k[C]$ is Cohen-Macaulay, see, e.g., {\cite[Theorem~6.3.5(a)]{BrunsHerzog-CohenMacaulay}}.

Serre's $R_\ell$ conditions can be combinatorially characterized for affine semigroup rings, see \cite[Theorem~2.7]{Vitulli:2009}.  In this paper, we focus on the $R_1$ condition.  We collect the characterization of this special case here:

\begin{proposition}[see {\cite[Proposition~3.2]{HibiKattan:2014}}]\label{prop:vitulli1}
Let $C$ be an affine semigroup, $k$ a field, and $k[C]$ the associated semigroup ring.  $k[C]$ satisfies Serre's $R_1$ condition if and only if for every facet $F$ of $\cone(C)$, we can find a supporting linear form $\sigma_F$ and corresponding hyperplane $\mathcal{H}_F$ which satisfies the following:
\begin{itemize}
\item The form $\sigma_F$ takes integral values on $\Z C$,
\item There exists $x\in C$ so that $\sigma_F(x)=1$, and
\item The groups $\Z(C \cap F)$ and $\Z C \cap \mathcal{H}_F$ are equal.
\end{itemize}
\end{proposition}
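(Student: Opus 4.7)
The plan is to reduce the statement to a direct regularity check at each height one monomial prime of $k[C]$ by exploiting the standard face-prime correspondence for affine semigroup rings.

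First, I would set up the face-prime dictionary. For each face $F$ of $\cone(C)$, the subset $P_F=(x^{a}:a\in C,\ a\notin F)$ is a monomial prime of $k[C]$, and the map $F\mapsto P_F$ gives a bijection between faces of $\cone(C)$ and monomial primes; in particular the height one monomial primes correspond exactly to the facets. Since $k[C]$ is a normal-in-the-torus affine semigroup ring, a localization argument shows that any non-monomial height one prime $\mathfrak{p}$ is disjoint from the set of monomials, so $k[C]_{\mathfrak p}$ is a further localization of the (Laurent) group algebra $k[\Z C]$, hence regular automatically. Therefore the $R_1$ condition is equivalent to the requirement that $k[C]_{P_F}$ be a DVR for every facet $F$.

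Next, for a fixed facet $F$, I would use the supporting linear form $\sigma_F$ (vanishing on $F$ and positive on $\cone(C)\setminus F$) to build a valuation $v$ on the fraction field $K=\mathrm{Frac}(k[C])$ via $v\!\left(\sum c_a x^{a}\right)=\min\{\sigma_F(a):c_a\neq 0\}$, and check that the center of $v$ on $k[C]$ is precisely $P_F$. Thus $k[C]_{P_F}$ is contained in the valuation ring $\mathcal{O}_v$, and it is a DVR if and only if the two rings coincide. I would then translate this containment-equality into the three stated conditions, one at a time: $\sigma_F$ must take values in $\Z$ on $\Z C$ so that $v$ actually lands in $\Z$; there must exist $x\in C$ with $\sigma_F(x)=1$ so that the value group is all of $\Z$ (providing a uniformizer $x^{x}$ in $k[C]_{P_F}$); and every Laurent monomial $x^{a}$ with $a\in\Z C$ and $\sigma_F(a)=0$ must already be a unit in $k[C]_{P_F}$, i.e.\ expressible as $x^{a_1}/x^{a_2}$ with $a_1,a_2\in C\cap F$.

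Finally, I would verify that the last requirement is exactly the group equality $\Z(C\cap F)=\Z C\cap\mathcal{H}_F$. One direction is automatic from $C\cap F\subset \mathcal{H}_F$; the substantive direction is that if $a\in\Z C$ satisfies $\sigma_F(a)=0$, writing $a=a_1-a_2$ with $a_i\in C\cap F$ exhibits $x^a=x^{a_1}/x^{a_2}$ as a unit in $k[C]_{P_F}$ because $x^{a_2}\notin P_F$. Conversely, a failure of the inclusion $\Z C\cap\mathcal{H}_F\subseteq\Z(C\cap F)$ produces an element of $\mathcal{O}_v^{\times}\cap k[C]_{P_F}$ whose inverse cannot be written with denominator outside $P_F$, obstructing the DVR property.

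The main obstacle is the last step: carefully verifying the equivalence between the group-theoretic equality and the unit condition in $k[C]_{P_F}$, and in particular ensuring that cancellations from negative exponents (the new feature over the polynomial-ring setting) do not secretly introduce elements of $\Z C\cap\mathcal{H}_F$ outside $\Z(C\cap F)$. Once this equivalence is established, the three bullet points of the proposition line up exactly with the three ingredients (integrality, surjectivity, and unit-generation) needed to promote $k[C]_{P_F}\hookrightarrow\mathcal{O}_v$ to an equality of DVRs.
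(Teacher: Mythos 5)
The paper does not actually prove this proposition: it is imported verbatim from Hibi--Katth\"an \cite[Proposition~3.2]{HibiKattan:2014}, which in turn rests on Vitulli's characterization of $R_\ell$ for affine semigroup rings, so there is no in-paper argument to compare against. Your proposal is a correct self-contained derivation, and it follows essentially the standard route of those references: the face--prime dictionary reduces $R_1$ to $k[C]_{P_F}$ being a DVR for each facet $F$ (your disposal of non-monomial height-one primes via localization at the monomials to $k[\Z C]$ is fine, since a height-one prime containing a monomial must equal a monomial prime), and the monomial valuation $v$ attached to a rational supporting form identifies the three bullet points with integrality, existence of a monomial uniformizer, and unit-generation in $k[C]_{P_F}$. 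Two spots deserve more care than your sketch gives them. First, for the ``only if'' direction of the second bullet, surjectivity of the value group onto $\Z$ is not by itself enough to produce $x\in C$ with $\sigma_F(x)=1$; you need the observation that a uniformizer $u=g/h$ with $h\notin P_F$ has $v(u)=v(g)=\min\{\sigma_F(a):a\in\mathrm{supp}(g)\}$, and this minimum is attained at an actual element of $C$. Second, your phrasing of the converse for the third bullet is muddled: the clean argument is that if $k[C]_{P_F}$ is a DVR then it must coincide with $\mathcal{O}_v$ (a rank-one valuation ring admits no proper overrings other than the field), so every $x^a$ with $a\in\Z C\cap\mathcal{H}_F$ lies in $k[C]_{P_F}$, i.e.\ $x^a h=g$ for some $h\notin P_F$; picking $b\in\mathrm{supp}(h)\cap F$ gives $a=(a+b)-b\in\Z(C\cap F)$. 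With those two points spelled out, the proof is complete and matches the cited source's method.
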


We recall that the {\em facets} of a cone are the proper faces with maximum dimension, and a {\em support form} of a facet $F$ is a function $\sigma_F(x)=\langle v^\ast,x\rangle$ which is nonnegative on the cone and zero on the facet.  Using this result, Hibi and Katth\"{a}n combinatorially characterize the Serre's $R_1$ condition for edge subrings of $k[x_1,\dots,x_n]$, see \cite[Theorem~2.1]{HibiKattan:2014}.  In this paper, we extend this characterization to the more difficult case of all quadratic-monomial generated subrings of the Laurent polynomial ring.

\section{Serre's \texorpdfstring{$R_1$}{R1} Condition for Signed Graphs}
\label{sec:Geometry}

In this section, we first characterize the facets of $\cone(\PG)$ in terms of subgraphs of the graph $G$. Then, we apply Proposition \ref{prop:vitulli1} to characterize the graphs $G$ for which the corresponding quadratic-monomial subrings of $k[x_1^{\pm 1},\dots,x_n^{\pm}]$ whose generators are of the form $(x_ix_j)^{\pm 1}$ satisfy $R_1$.  In order to study the facets, the first step is to compute the dimension of $\cone(\PG)$.

\subsection{Dimension of \texorpdfstring{$\cone(\mathcal{P}_G)$}{cone(PG)}}
\label{sec:Dimension}
In this section, we compute the dimension of the cone generated by the edge polytope $\PG$ by constructing a collection of independent hyperplanes which contain the polytope.  In particular, the codimension of $\cone(\PG)$ is the dimension of the space of hyperplanes of the form $\{x\in\mathbb{R}^n:\langle v^\ast,x\rangle=0\}$ containing the cone.  Here, $v^\ast$ is a dual vector in the dual vector space $(\R^n)^\ast$.  It is often useful to view $v^*$ as a set of vertex weights for $G$, i.e., if the hyperplane $\{w:\langle v^*,w\rangle =0\}$ contains $\rho(E)$, then, for all $ij\in E$, $(v^*)_i + (v^*)_j = 0$.  For notational convenience, we write dual vectors in terms of the standard dual basis, that is $v^*=\sum_i \lambda_i e_i^*$ where $\{e_i^*\}$ is the basis dual to $\{e_i\}$.  

Suppose $G$ is a signed graph and $H$ is a bipartite component of $H$.
Let $V(H)=L\cup R$ be the bipartition of $H$, then we write $e_L^*=\sum_{i\in L}e_i^*$ and $e_R^*=\sum_{j\in R}e_j^*$ for the dual characteristic vectors for $L$ and $R$.
With this notation, every edge $ij$ in $H$ has $\langle e_L^*,\rho(ij)\rangle =\langle e_R^*,\rho(ij)\rangle =\sgn(ij)$.
Hence, $\langle e_L^* - e_R^*,\rho(ij)\rangle =0$ for every edge in $G$.
If $G$ has multiple bipartite components, then we observe that the supports of the hyperplanes for distinct bipartite components are disjoint, so the hyperplanes are independent.

\begin{definition}
Let $G$ be a signed graph.  We write $\bicomp(G)$ for the number of bipartite components of $G$.
\end{definition}

Since the supports for the hyperplanes for disjoint bipartite components are disjoint and these hyperplanes contain $\PG$, we observe that there are at least $\bicomp(G)$ independent hyperplanes that contain $\PG$.

\begin{example}
Let $G$ be the graph in Figure \ref{fig:exampleC6}, i.e., $G$ has vertex set $\{1,2,3,4,5,6\}$ and edge set $\{+12,-23,+34,+45,-56,+16\}$.
This is a bipartite graph with bipartition $\{1,3,5\}$ and $\{2,4,6\}$.
Hence $\PG$ is contained in the hyperplane defined by $\langle (e_1^*+e_3^*+e_5^*)-(e_2^*+e_4^*+e_6^*),x\rangle =0$.
\end{example}

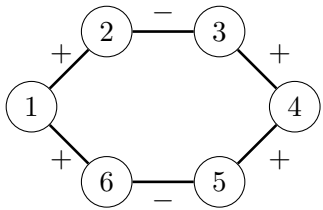
\begin{figure}[hbt]
	\centering
		\begin{tikzpicture}
		\node [draw,circle] (A) at (0,1) {1};
		\node [draw,circle] (B) at (1,2) {2};
		\node [draw,circle] (C) at (2.5,2) {3};
		\node [draw,circle] (D) at (3.5,1) {4};
		\node [draw,circle] (E) at (2.5,0) {5};
		\node [draw,circle] (F) at (1,0) {6};
		
		\node at (.4,1.7) {$+$};
		\node at (1.75,2.25) {$-$};
		\node at (3.3,1.7) {$+$};
		\node at (.4,.3) {$+$};
		\node at (1.75,-.25) {$-$};
		\node at (3.3,.3) {$+$};
		
		\draw[line width=1pt] (A) edge (B);
		\draw[line width=1pt] (B) edge (C);
		\draw[line width=1pt] (C) edge (D);
		\draw[line width=1pt] (D) edge (E);
		\draw[line width=1pt] (E) edge (F);
		\draw[line width=1pt] (F) edge (A);
		\end{tikzpicture}
		\caption[Example of a 4-dimensional polytope from a signed graph.]{The signed graph $G$ where $\PG$ is contained in a hyperplane due to the biparition.  The hyperplane corresponding to the bipartition is $\langle (e_1^*+e_3^*+e_5^*)-(e_2^*+e_4^*+e_6^*),x\rangle =0$, thus, $\dim\cone(\PG)=6-1=5$.
		 \label{fig:exampleC6}}
\end{figure}

\begin{lemma}\label{lem:BipHyper}
Let $G$ be a signed graph with bipartite components $G_1,G_2,\dots,G_k$, and associated dual vectors $e_{L_1}^*-e_{R_1}^*,\dots,e_{L_k}^*-e_{R_k}^*$.
Suppose $\PG$ is contained within the hyperplane defined by $\langle v^*,x\rangle =0$.  Then, $v^*$ is a linear combination of $e_{L_1}^*-e_{R_1}^*,\dots,e_{L_k}^*-e_{R_k}^*$.
\end{lemma}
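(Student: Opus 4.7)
The plan is to translate the geometric containment into a linear constraint on vertex weights and then propagate the constraints along edges of $G$. Writing $v^\ast=\sum_{i=1}^n\lambda_i e_i^\ast$ in the standard dual basis, the containment $\PG\subseteq\{x:\langle v^\ast,x\rangle=0\}$ is equivalent to $\langle v^\ast,\rho(e)\rangle=0$ for every $e\in E(G)$. For a non-loop edge $e=\sgn(e)\cdot ij$, this reads $\sgn(e)(\lambda_i+\lambda_j)=0$, i.e., $\lambda_j=-\lambda_i$; for a loop at $i$, it forces $2\sgn(ii)\lambda_i=0$, hence $\lambda_i=0$. In particular, the signs play no role in the constraint system: only the underlying unsigned graph matters.

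Next I would analyze one connected component $H$ of $G$ at a time. Fix a base vertex $v_0\in V(H)$ and set $\alpha=\lambda_{v_0}$. Propagating the relation $\lambda_j=-\lambda_i$ along any walk $v_0,v_1,\dots,v_t$ in $H$ gives $\lambda_{v_t}=(-1)^t\alpha$. For this assignment to be well defined, every closed walk at $v_0$ must have even length, which is exactly the condition that $H$ is bipartite. Thus, if $H$ is non-bipartite (including the case where $H$ contains a loop), some closed walk of odd length exists and forces $\alpha=-\alpha$, so $\alpha=0$ and $\lambda_i=0$ for every $i\in V(H)$.

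If $H$ is bipartite with bipartition $V(H)=L\cup R$ chosen so that $v_0\in L$, the same propagation yields $\lambda_i=\alpha$ for $i\in L$ and $\lambda_i=-\alpha$ for $i\in R$. In dual-vector terms, the restriction of $v^\ast$ to the coordinates in $V(H)$ is $\alpha(e_L^\ast-e_R^\ast)$. Since the vertex sets of distinct components are disjoint, summing the per-component contributions over the bipartite components $G_1,\dots,G_k$ (with non-bipartite components contributing $0$) gives $v^\ast=\sum_{i=1}^k\alpha_i(e_{L_i}^\ast-e_{R_i}^\ast)$ for suitable scalars $\alpha_i$, which is the desired expression.

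I do not expect a serious obstacle; the argument is essentially linear algebra on the graph, driven by the odd closed-walk criterion for bipartiteness. The only small subtleties to track are (i) isolated vertices, which qualify as bipartite components with one side empty so that $e_L^\ast - e_R^\ast = \pm e_i^\ast$ absorbs the unconstrained $\lambda_i$, and (ii) loops, which make their component non-bipartite and so force $\lambda_i=0$ uniformly; both are handled by the single odd closed-walk argument above.
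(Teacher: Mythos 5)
Your proposal is correct and follows essentially the same route as the paper: both reduce the containment to the edge constraints $(v^\ast)_i=-(v^\ast)_j$, work component by component, and show that a nonzero weight forces the component to be bipartite with $v^\ast$ restricting to a multiple of $e_L^\ast-e_R^\ast$ (the paper reads off $L$ and $R$ as the level sets of the weight, while you propagate along walks via the odd-closed-walk criterion — the same argument in different clothing). Your explicit treatment of loops and isolated vertices is a welcome touch that the paper leaves implicit.
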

\begin{proof}
The claim is trivial when $v^\ast=0$; we assume, therefore, that $v^\ast\not=0$. Suppose, first, that $G$ is a connected graph. Observe that, for all edges $ij$, $\langle v^\ast,\rho(ij)\rangle=\sgn(ij)\left((v^\ast)_i+(v^\ast)_j\right)=0$; it follows that, $(v^\ast)_i=-(v^\ast)_j$.  Since $v^\ast\not=0$, there is some $i$ such that $(v^\ast)_i=c\not=0$.  From connectivity and the observation that neighboring vertices have opposite signs, it follows that for all vertices $j$, $(v^\ast)_j=\pm c$.  Define $L$ to be the set of vertices whose weight is $c$ and $R$ to be the set of vertices whose weight is $-c$.  $L$ and $R$ form a partition of $G$ and they cannot contain any edges because the signs of the endpoints of an edge have opposite signs.  Therefore, $L$ and $R$ form the bipartition of $G$ and $v^* =  c(e_L^* - e_R^*)$.
For a disconnected graph, by applying this argument to each component, it follows that $v^*$ is a linear combination of these dual vectors.
\end{proof}

We have shown that the dual vectors $e_{L_i}^\ast-e_{R_i}^\ast$ form a basis for those hyperplanes containing $\cone(\PG)$.  Therefore, we can compute the dimension of $\cone(\PG)$ as follows:

\begin{observation}\label{thm:dimFormula}
Let $G$ be a signed graph on $n$ vertices, then  \[\dim \cone(\PG) = n-\bicomp(G).\]
\end{observation}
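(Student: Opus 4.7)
The plan is to argue by duality: show that the annihilator of $\cone(\PG)$ in $(\R^n)^*$ has dimension exactly $\bicomp(G)$, and conclude by the rank–nullity relation $\dim \cone(\PG) = n - \dim \operatorname{Ann}(\cone(\PG))$. Since every generator $\rho(e)$ is nonzero and $\cone(\PG)=\R_+\cdot \PG$ equals the positive real span of $\rho(E(G))$, its linear dimension coincides with the dimension of the $\R$-span of $\rho(E(G))$, so this reduces the statement to counting independent linear forms that vanish on $\rho(E(G))$.

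The first step is to exhibit $\bicomp(G)$ such independent forms. Let $G_1,\dots,G_k$ be the bipartite components of $G$ with bipartitions $L_i\cup R_i$, and consider $v_i^*:=e_{L_i}^*-e_{R_i}^*$. As noted in the discussion preceding Lemma \ref{lem:BipHyper}, each edge $ij$ of $G_i$ satisfies $\langle v_i^*,\rho(ij)\rangle=\sgn(ij)((v_i^*)_i+(v_i^*)_j)=0$, and every edge outside $G_i$ is supported on vertices where $v_i^*$ vanishes; hence each $v_i^*$ lies in the annihilator. These forms have pairwise disjoint supports (namely the disjoint vertex sets $V(G_i)$), so they are linearly independent, giving $\dim \operatorname{Ann}(\cone(\PG))\geq \bicomp(G)$.

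The second step is the reverse inequality, and this is exactly the content already established in Lemma \ref{lem:BipHyper}: any dual vector $v^*$ with $\langle v^*,\rho(e)\rangle=0$ for all $e\in E(G)$ must be a linear combination of the $v_i^*$. Applying this lemma, the annihilator is spanned by the $\bicomp(G)$ forms exhibited above, so $\dim \operatorname{Ann}(\cone(\PG))=\bicomp(G)$.

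Combining the two steps yields $\dim \cone(\PG) = n - \bicomp(G)$, which is the desired formula. There is no real obstacle here: all substantive work has already been carried out in the discussion preceding the observation and in Lemma \ref{lem:BipHyper}, so the proof is essentially a one-paragraph bookkeeping argument that packages those facts via linear-algebraic duality. The only care needed is to note explicitly that the codimension of the cone equals the dimension of the annihilator (which requires the easy remark that $\rho(E(G))$ contains no zero vector, so $\cone(\PG)$ is a genuine polyhedral cone whose linear span coincides with the span of its generators).
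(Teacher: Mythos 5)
Your proof is correct and follows essentially the same route as the paper: the paper's discussion preceding the observation exhibits the $\bicomp(G)$ independent dual vectors $e_{L_i}^*-e_{R_i}^*$ annihilating $\rho(E(G))$, and Lemma \ref{lem:BipHyper} supplies the reverse containment, after which the dimension formula is read off exactly as you do. The only difference is cosmetic: you make the duality bookkeeping explicit, which the paper leaves implicit.
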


\begin{example}\label{example:firstexample}
Let $G$ be the graph from Example \ref{example:firstexample}.  Observe that $G$ is a connected bipartite graph with $n=6$ and $\bicomp(G)=1$.  Thus, by Observation \ref{thm:dimFormula}, $\dim\cone(\PG)= n-\bicomp(G)=5$.
\end{example}

\subsection{Serre's \texorpdfstring{$R_1$}{R1} Condition for \texorpdfstring{$k[G]$}{k[G]}}

In this section, we apply Proposition \ref{prop:vitulli1} to characterize the signed graphs $G$ for which $k[G]$ satisfies $R_1$.  Since the characterization in Proposition \ref{prop:vitulli1} is in terms of facets of $\cone(\PG)$, we begin by characterizing the facets of $\cone(\PG)$.  By the basic theory of convex cones, see, e.g., \cite{CoxLittleSchenck-ToricVarieties}, every proper face of $\cone(\PG)$ is given by the intersection of the cone with a supporting hyperplane $\mathcal{H}$.  Moreover, $\mathcal{H}\cap\cone(\PG)$ is a cone whose generators are the generators of $\cone(\PG)$ in $\mathcal{H}$.  In particular, the set of edges $e\in G$ such that $\rho(e)\in\mathcal{H}$ form a subgraph $H$ of $G$ such that $\mathcal{H}\cap\cone(\PG)=\cone(\mathcal{P}_H)$.  In this section, we characterize those subgraphs $H$ of $G$ such that $\cone(\mathcal{P}_H)$ is a facet of $\cone(\PG)$.  

We begin by noting the necessary, but not sufficient fact from Observation \ref{thm:dimFormula} that, for a subgraph $H$ to determine a facet of $\cone(\PG)$, $H$ must have exactly one more bipartite component than $G$, i.e., $\bicomp(H)=\bicomp(G)+1$.

\begin{definition}\label{def:facetsubgraphsigned}
Let $G$ be a signed graph.  A subgraph $H$ of $G$ is a {\em facet subgraph} if $H$ satisfies the following properties:
\begin{enumerate}
\item $H$ has exactly one more bipartite component than $G$ and
\item For any bipartite component $H'$ of $H$ which is not a component of $G$, there is a bipartition $H'=L\cup R$ such that every edge $e\in G\setminus H$ is of one of the following forms:
\begin{itemize}
\item $e$ is a positive edge incident to $L$, but not $R$ or 
\item $e$ is a negative edge incident to $R$, but not $L$. 
\end{itemize}
\end{enumerate}
\end{definition}

\begin{remark}
The characterization for these subgraphs in \cite{HibiKattan:2014} is based upon independent sets of vertices.  Since all graph edges in \cite{HibiKattan:2014} have positive sign and $G$ is connected, the independent subset corresponds to one of $L$ or $R$.  However, the notion of independence is not fine enough to distinguish the case of a signed graph because there are two types of incident edges.
\end{remark}

Our goal is to show that the facet subgraphs are exactly those subgraphs where $\cone(\mathcal{P}_H)$ is a facet of $\cone(\PG)$.  

\begin{proposition}\label{prop:signedConeFacets}
Let $G$ be a signed graph.  $F$ is a facet of $\cone(\PG)$ if and only if there exists a facet subgraph $H$ such that $\cone(\mathcal{P}_H)=F$.
\end{proposition}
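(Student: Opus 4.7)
The plan is to prove both directions by matching supporting linear forms $\sigma(x)=\langle v^\ast,x\rangle$ of $\cone(\PG)$ with bipartitions of the ``new'' bipartite components of $H$, using Lemma \ref{lem:BipHyper} as the main technical tool. For the forward implication, fix a support form $\sigma$ of the facet $F$ and let $H=\{e\in E(G):\sigma(\rho(e))=0\}$, so $F=\cone(\mathcal{P}_H)$. Viewing $H$ on the vertex set $V(G)$ (so isolated vertices count as bipartite components), comparing the dimension formulas from Observation \ref{thm:dimFormula} immediately gives condition (1), $\bicomp(H)=\bicomp(G)+1$. For condition (2) I would analyze $v^\ast$ component-by-component: applying Lemma \ref{lem:BipHyper} to each $G_i\cap H$, together with the gauge freedom to shift $v^\ast$ by dual vectors $e_L^\ast-e_R^\ast$ coming from the bipartite components of $G$ (which do not affect $\sigma|_{\cone(\PG)}$), one can arrange $v^\ast|_{V(G_i)}=0$ on every component $G_i$ except a single ``active'' component $G_{i^\ast}$. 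This $G_{i^\ast}$ is either non-bipartite with one new bipartite component $H'$ appearing in $G_{i^\ast}\cap H$, or bipartite and split into two bipartite pieces $H_1',H_2'$. A second application of Lemma \ref{lem:BipHyper} expresses $v^\ast|_{V(H')}=c_{H'}(e_L^\ast-e_R^\ast)$ for some bipartition of each such $H'$, and after one further shift we may take $c_{H'}\ne 0$. Setting $L=\{v\in V(H'):(v^\ast)_v>0\}$ and $R=\{v\in V(H'):(v^\ast)_v<0\}$ yields the required bipartition; for each $e\in G\setminus H$, strict positivity $\sigma(\rho(e))>0$ combined with the partition of $V(G)$ into the various $L$'s, $R$'s, and the $v^\ast\equiv 0$ region forces $e$ to take exactly one of the two forms of condition (2).

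For the converse, given a facet subgraph $H$, I would construct a support form directly from the bipartitions supplied by condition (2). Set $(v^\ast)_v=+1$ if $v\in L$ for some new bipartite component of $H$, $(v^\ast)_v=-1$ if $v\in R$ for such a component, and $(v^\ast)_v=0$ otherwise. A direct computation then verifies $\sigma(\rho(e))=0$ for every $e\in H$ (an edge inside a new bipartite component $H'$ crosses its bipartition giving $\pm(1-1)=0$, and all other edges of $H$ have both endpoints in the $v^\ast\equiv 0$ region) and $\sigma(\rho(e))>0$ for every $e\in G\setminus H$ (the prescribed form of $e$ in condition (2) makes $\sgn(e)((v^\ast)_i+(v^\ast)_j)$ evaluate to $+2$). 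Hence $\{x\in\cone(\PG):\sigma(x)=0\}=\cone(\mathcal{P}_H)$, and condition (1) together with Observation \ref{thm:dimFormula} certifies that this face has codimension one in $\cone(\PG)$, i.e., is a facet.

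The main obstacle I expect is the situation in which the active ambient component $G_{i^\ast}$ is itself bipartite and is fragmented into two new bipartite components $H_1'$ and $H_2'$ in $H$. There the extra gauge freedom from shifting $v^\ast$ by the bipartition dual $e_L^\ast-e_R^\ast$ of $G_{i^\ast}$ must be used carefully to keep $v^\ast$ nonzero, and the inter-component edges in $G\setminus H$ are simultaneously constrained by the bipartitions of both $H_1'$ and $H_2'$; in particular, the bipartition of $H_2'$ inherited from $G_{i^\ast}$ may need to be swapped so that both bullet points of condition (2) are realized compatibly for each such edge. Keeping this sign bookkeeping consistent across the two new bipartite components, and verifying that the construction in the converse direction never accidentally kills $\sigma$ on some edge of $G\setminus H$, is the most delicate part of the argument.
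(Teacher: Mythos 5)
Your proposal is correct and follows essentially the same route as the paper's proof: both directions hinge on identifying supporting forms with bipartition dual vectors $e_L^\ast-e_R^\ast$ via Lemma \ref{lem:BipHyper}, counting dimensions with Observation \ref{thm:dimFormula}, and reading off condition (2) from the sign of $\langle e_L^\ast-e_R^\ast,\rho(e)\rangle$ on edges of $G\setminus H$. The only cosmetic differences are that you normalize $v^\ast$ by gauge shifts where the paper argues by linear independence plus a dimension count, and in the converse you sum the duals over all new bipartite components where the paper uses a single one; neither changes the substance.
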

\begin{proof}
Suppose first that $H$ is a facet subgraph.  Since $H$ has one additional bipartite component than $G$, we know that $\dim\cone(\mathcal{P}_H)=\dim\cone(\PG)-1$.  Therefore, it is enough to show that $\cone(\mathcal{P}_H)$ is a face of $\cone(\PG)$.  Let $H'$ be a bipartite component of $H$ which is not a component of $G$.  Let $H'=L\cup R$ be as in the definition of a facet subgraph, and consider $e_L^\ast-e_R^\ast$.  By the discussion in Section \ref{sec:Dimension}, we know that the hyperplane $\mathcal{H}=\{x\in\R^n:\langle e_L^\ast-e_R^\ast,x\rangle=0\}$ contains $\cone(\mathcal{P}_H)$.  On the other hand, for any edge $e\in G\setminus H$, we see, by analyzing the cases in the definition of a facet subgraph, that $\langle e_L^\ast-e_R^\ast,\rho(e)\rangle=1,2$.  Therefore, $\mathcal{H}$ is a supporting hyperplane, and the set of edges $e$ such that $\rho(e)\in\mathcal{H}$ is the same as the set of edges of $H$.  Hence $\cone(\mathcal{P}_H)=\mathcal{H}\cap\cone(\PG)$.

For the other direction, let $\mathcal{H}$ be a supporting hyperplane for $\cone(\PG)$ and $\mathcal{F}=\mathcal{H}\cap\cone(\PG)$ be a facet of $\cone(\PG)$.  Then, there is a dual vector $v^\ast$ such that $\mathcal{H}=\{x\in\R^n:\langle v^\ast,x\rangle=0\}$ and $\langle v^\ast,x\rangle\geq 0$ for all $x\in\cone(\PG)$.  Let $H$ be the subgraph of $G$ consisting of all edges $e\in G$ satisfying $\langle v^\ast,\rho(e)\rangle=0$.  By the theory of cones, we know that $\mathcal{F}=\cone(\mathcal{P}_H)$.  We show that $H$ is a facet subgraph.  Since $\dim\mathcal{F}=\dim\cone(\PG)-1$, it follows, from Observation \ref{thm:dimFormula}, that $H$ has exactly one more bipartite component than $G$.  Therefore, it is enough to check the second condition.  Let $H'$ be any bipartite component of $H$ which is not a component of $G$.  Moreover, let $H'=L\cup R$ be any bipartition of $H'$.

Let $G_1,\dots,G_k$ be the bipartite components of $G$ with corresponding bipartite characteristic vectors $e^\ast_{L_1}-e^\ast_{R_1},\dots e^\ast_{L_k}-e^\ast_{R_k}$.  From Section \ref{sec:Dimension}, we know that these vectors are independent and their corresponding hyperplanes contain $\cone(\PG)$, so they also contain $\cone(\mathcal{P}_H)$.  Now, consider $e_L^\ast-e_R^\ast$ corresponding to $H'$.  By Section \ref{sec:Dimension}, we see that the hyperplane corresponding to this vector contains $\cone(\mathcal{P}_H)$.  We can see that this vector is not a linear combination of the bipartite characteristic vectors above as follows: Since $H'$ is not a component of $G$, there is some edge $e\in G\setminus H$ incident to $H'$.  Since the hyperplanes corresponding to the dual vectors $e^\ast_{L_i}-e^\ast_{R_i}$ contain $\PG$ for all $i$, it follows that $\langle e^\ast_{L_i}-e^\ast_{R_i},\rho(e)\rangle=0$.  On the other hand, if only one endpoint of $e$ is incident to $H'$, then $\langle e_L^\ast-e_R^\ast,\rho(e)\rangle=\pm 1\not=0$, or if both endpoints are in $L$ or both are in $R$, then $\langle e_L^\ast-e_R^\ast,\rho(e)\rangle=\pm 2\not=0$.  In each of these cases, we see that since $\langle e_L^\ast-e_R^\ast,\rho(e)\rangle\not=0$, $e_L^\ast-e_R^\ast$ cannot be a combination of the bipartite characteristic vectors as they would give a value of $0$.  We now show that the remaining case, i.e., where one endpoint is in $L$ and the other is in $R$ is impossible.  Suppose that $e=ij$ with $i\in L$ and $j\in R$.  By replicating the proof of Lemma \ref{lem:BipHyper}, we see that $(v^\ast)_i=-(v^\ast)_j$, so $\langle v^\ast,\rho(e)\rangle=(v^\ast)_i+(v^\ast)_j=0$.  Therefore, $e\in H$, a contradiction.

By considering dimensions, we see that the intersection of the hyperplanes determined by $e^\ast_{L_1}-e^\ast_{R_1},\dots e^\ast_{L_k}-e^\ast_{R_k}$ and $e_L^\ast-e_R^\ast$ must be the the subspace containing $\cone(\mathcal{P}_H)$.  Therefore, $v^\ast$ can be written as a linear combination of these vectors.  Moreover, the coefficient of $e_L^\ast-e_R^\ast$ must be nonzero since otherwise $v^\ast$ would contain all of $\cone(\PG)$.  By reversing $L$ and $R$, if necessary, we may assume that the coefficient of $e_L^\ast-e_R^\ast$ is positive.  Therefore, for all $x$ in the span of $\PG$, $\langle v^\ast,x\rangle> 0$ if and only if $\langle e_L^\ast-e_R^\ast,x\rangle>0$.

We now check the second property for a facet subgraph.  Suppose that $e\in G\setminus H$.  Therefore, $\langle v^\ast,\rho(e)\rangle>0$, so $\langle e_L^\ast-e_R^\ast,\rho(e)\rangle>0$.  If $\sgn(e)=+1$, then either both endpoints of $e$ are in $L$, or one is in $L$ and the other is not in $H'$.  Similarly, when $\sgn(e)=-1$, either both endpoints of $e$ are in $R$ or one is in $R$ and the other is not in $H'$.  Therefore, since $H'$ was arbitrary, the second condition in the definition of a facet subgraph holds, and $H$ is a facet subgraph.
\end{proof}

In a less formal way, we can describe the construction of facet subgraphs from $G$:

\begin{observation}\label{obs:bipartitecount}
There are two ways to increase the number of bipartite components in the transformation from $G$ to the subgraph $H$: either we remove enough edges from a bipartite component of $G$ so that it splits into two components or we remove enough edges from a non-bipartite component of $G$ so that it splits into a bipartite component and any number of non-bipartite components.  The second condition of the definition of a facet subgraph implies that exactly one component of $G$ is changed in a facet subgraph since every removed edge must be incident to a component of $H$.  Moreover, if $G_1$ is a component of $G$ that splits into two nonempty components $H_1$ and $H_2$, with $H_1$ bipartite, then $G_1$ is bipartite if and only if $H_2$ is bipartite.
\end{observation}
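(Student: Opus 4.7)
The plan is to prove the observation by decomposing the change $\bicomp(H) - \bicomp(G) = 1$ across the components of $G$ and then using condition~(2) of the facet subgraph definition to concentrate that change into a single component.

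First, I would note that the components $G_1,\dots,G_r$ of $G$ partition $V(G)$, so each component of $H$ is contained in a unique $G_i$. Writing $H_{i,1},\dots,H_{i,t_i}$ for the components of $H$ that sit inside $G_i$ and $b_i$ for the number of those which are bipartite, the contribution of $G_i$ to $\bicomp(H) - \bicomp(G)$ equals $b_i - 1$ when $G_i$ is bipartite (in which case $b_i = t_i$, since every subgraph of a bipartite graph is bipartite) and $b_i$ when $G_i$ is not bipartite. Summing these contributions over $i$ must yield $1$.

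Next, I would use condition~(2) to show that only one $G_i$ actually loses any edges. Let $H'$ be a new bipartite component of $H$; as a connected subgraph of $G$, $H'$ is contained in a unique $G_i$. Condition~(2) requires every edge $e \in G \setminus H$ to be incident to $H'$, and since every edge of $G$ has both endpoints inside the same component of $G$, this forces $e \in G_i$. If there were a second new bipartite component $H'' \subseteq G_j$ with $j \neq i$, the same argument applied to $H''$ would force every removed edge into $G_j$ as well, which is impossible because $V(G_i)$ and $V(G_j)$ are disjoint. Hence all new bipartite components, and all removed edges, lie in one common $G_i$; every other component of $G$ is unchanged in $H$.

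Finally, I would read off the conclusions. Calling the single changed component $G_1$, its contribution must equal $1$, which splits into exactly two cases: either $G_1$ is bipartite and $t_1 = b_1 = 2$ (a bipartite component splits into two bipartite pieces), or $G_1$ is non-bipartite and $b_1 = 1$ (exactly one piece is bipartite, the remaining $t_1 - 1$ are not). These are the two mechanisms described in the statement. Specialising to $t_1 = 2$ with $H_1$ bipartite, so $b_1 \geq 1$, the same case split becomes: $G_1$ bipartite forces $b_1 = 2$, so $H_2$ is bipartite; $G_1$ non-bipartite forces $b_1 = 1$, so $H_2$ is not bipartite. This gives the biconditional $G_1$ bipartite $\Longleftrightarrow$ $H_2$ bipartite. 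The only step requiring care is the use of condition~(2) to rule out simultaneous modification of two distinct components of $G$; the key point is simply that a single edge cannot be incident to two components of $G$ whose vertex sets are disjoint.
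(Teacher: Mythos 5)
Your proof is correct and rigorously carries out the component-by-component count of $\bicomp(H)-\bicomp(G)$ that the paper's Observation (stated there without proof) implicitly relies on, so the two take essentially the same approach. The only detail you leave tacit is that $G\setminus H\neq\emptyset$ (immediate from condition~(1), since otherwise $H=G$ and $\bicomp(H)=\bicomp(G)$), which is needed to turn ``every removed edge lies in both $G_i$ and $G_j$'' into an actual contradiction.
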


We observe that the support form $\langle e_L^\ast-e_R^\ast,x\rangle$ is integral on $\Z \rho(E)$ since its value on every edge is $0$, $1$, or $2$.  Let $H'$ be as in the definition of a facet subgraph.  If there exists an edge $e\in G\setminus H$ such that exactly one endpoint of $e$ is incident to $H'$, then $\langle e_L^\ast-e_R^\ast,\rho(e)\rangle=1$, as needed in Proposition \ref{prop:vitulli1}.  On the other hand, if every edge $e\in G\setminus H$ has both endpoints in $H'$, then the values of the support form $\langle e_L^\ast-e_R^\ast,x\rangle$ are $0$ and $2$, so the support form $\frac{1}{2}\langle e_L^\ast-e_R^\ast,x\rangle$ satisfies the first two conditions of Proposition \ref{prop:vitulli1}.  We now consider the third condition from Proposition \ref{prop:vitulli1}.  We begin with the following lemma, which characterizes the integer latices used in the third condition of the proposition.

\begin{lemma}\label{lem:signedLattice}
Suppose that $G$ is a connected signed graph with $n$ vertices.  Let $a\in\Z^n$.  Then $a\in\Z \rho(E(G))$ if and only if the one of the following conditions hold:
\begin{itemize}
\item $G$ is not bipartite and 
\begin{equation}\label{eq:firstSignedLattice}
\sum_{i\in G} a_i \in 2\Z.
\end{equation}
\item $G=L\cup R$ is a bipartition of $G$ and 
\begin{equation}\label{eq:secondSignedLattice}
\sum_{i\in L} a_i = \sum_{j\in R} a_j.
\end{equation}
\end{itemize}
\end{lemma}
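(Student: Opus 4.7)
The plan is to prove both directions, exploiting that $\Z\rho(E(G))$ is a group and therefore closed under negation; consequently the sign of an edge is irrelevant for lattice membership, and I may treat every edge $ij\in G$ as contributing $e_i+e_j$ to the generating set. For the forward direction I would verify both conditions directly on generators: each $\rho(e)=\sgn(e)(e_i+e_j)$ has coordinate sum $\pm 2$, yielding \eqref{eq:firstSignedLattice}; and when $G$ has bipartition $L\cup R$, each edge has exactly one endpoint in each part, so $\sum_{i\in L}\rho(e)_i=\sum_{j\in R}\rho(e)_j=\sgn(e)$, yielding \eqref{eq:secondSignedLattice}.

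For the reverse direction, the key tool is a telescoping identity associated to any walk $w_0,w_1,\dots,w_k$ in $G$:
\[
\sum_{i=0}^{k-1}(-1)^i(e_{w_i}+e_{w_{i+1}}) \;=\; e_{w_0}+(-1)^{k-1}e_{w_k} \;\in\; \Z\rho(E(G)).
\]
Using this I handle the two cases separately. In the bipartite case with parts $L$ and $R$, walks between vertices in a common part have even length and across-part walks have odd length, so $e_v-e_{v'}\in\Z\rho(E(G))$ for $v,v'$ in a common part and $e_v+e_w\in\Z\rho(E(G))$ for $v\in L$, $w\in R$. Fixing base vertices $v_0\in L$ and $w_0\in R$ and writing $s=\sum_{v\in L}a_v=\sum_{w\in R}a_w$, the reduction
\[
a-\sum_{v\in L\setminus\{v_0\}}a_v(e_v-e_{v_0})-\sum_{w\in R\setminus\{w_0\}}a_w(e_w-e_{w_0}) \;=\; s(e_{v_0}+e_{w_0})
\]
exhibits $a$ as a lattice element.

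In the non-bipartite case, every vertex lies on an odd closed walk, obtained by walking from $v$ to an odd cycle, traversing it, and returning; the telescope with $w_0=w_k$ and $k$ odd then yields $2e_v\in\Z\rho(E(G))$ for every vertex $v$. Moreover, concatenating an arbitrary walk with an odd closed walk based at an endpoint produces walks of both parities between any two vertices, so both $e_u+e_v$ and $e_u-e_v$ lie in $\Z\rho(E(G))$. Fixing $v_0$ and given $a$ with $\sum_i a_i\in 2\Z$, the analogous reduction $a-\sum_{i\neq v_0}a_i(e_i-e_{v_0})=\bigl(\sum_i a_i\bigr)e_{v_0}$ lies in the lattice because $2e_{v_0}$ does.

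The step I expect to be the main obstacle is the non-bipartite case, specifically establishing $e_u-e_v\in\Z\rho(E(G))$ for every pair $u,v$; this rests on producing walks of each parity between $u$ and $v$, which in turn rests on the existence of odd closed walks based at every vertex. This parity-mixing phenomenon is the essential structural input that separates the bipartite and non-bipartite alternatives and justifies the dichotomy in the statement.
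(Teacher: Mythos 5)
Your proof is correct and follows essentially the same route as the paper: both directions rest on checking the coordinate-sum condition on the generators and, conversely, on using walks of the appropriate parity (whose existence in a connected graph is governed exactly by the bipartite/non-bipartite dichotomy) to telescope out the vectors $e_i\pm e_j$ that generate the relevant sublattice. Your preliminary normalization of edge signs (valid since $\Z\rho(E(G))$ is a group) and your explicit final reductions to $s(e_{v_0}+e_{w_0})$ and $\left(\sum_i a_i\right)e_{v_0}$ are minor streamlinings of steps the paper performs by choosing $\pm 1$ weights along the walks and then asserts without writing out the computation.
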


\begin{proof}
Assume first that $G$ is not bipartite.  Since, for every edge $e$ of $G$, $\sum_{i\in V(G)}\rho(e)_i=\pm 2$, the forward direction of the first case is proved.  For the other direction, let $i$ and $j$ be any pair of vertices in $G$.  Since $G$ is connected and $G$ is not bipartite, there are walks $\{i=i_0,i_1,\dots,i_m=j\}$ and $\{i=j_0,j_1,\dots,j_k=j\}$ of odd and even length, respectively.  By choosing integer weights $b_\ell=\pm 1$ along the odd walk, we can make $b_\ell\sgn(i_{\ell-1}i_\ell)$ alternate in sign with $b_1\sgn(ii_1)=1=b_m\sgn(i_{m-1}j)$.  In this case, $\sum b_\ell\rho(i_{\ell-1}i_\ell)=e_i+e_j$.  Similarly, we can choose integer weights $c_\ell=\pm 1$ along the even walk, we can make $c_\ell\sgn(j_{\ell-1}j_\ell)$ alternate in sign with $c_1\sgn(ij_1)=1$ and $c_k\sgn(j_{k-1}j)=-1$.  In this case, $\sum b_\ell\rho(i_{\ell-1}i_\ell)=e_i-e_j$.  Since every vector $a\in\Z^n$ whose coordinates sum to an even number is an integral linear combination of $e_i+e_j$ and $e_i-e_j$, for various $i$ and $j$, the equivalence holds.

Assume now that $G$ is bipartite.  To prove the forward direction, since every edge $e\in G$ is incident to a vertex in both $L$ and $R$, the given equality holds for those edges, and, hence, for their linear combinations.  For the other direction, let $i\in L$ and $j\in R$ be any pair of vertices.  Then, there is an odd length path $\{i=i_0,i_1,\dots,i_m=j\}$ in $G$ from $i$ to $j$.  By choosing integer weights $b_\ell=\pm 1$ along this walk, we can make $b_\ell\sgn(i_{\ell-1}i_\ell)$ alternate in sign with $b_1\sgn(ii_1)=1=b_m\sgn(i_{m-1}j)$.  In this case, $\sum b_\ell\rho(i_{\ell-1}i_\ell)=e_i+e_j$.  Since every vector $a\in\Z^n$ which satisfies the given equality can be written as an integral linear combination of $e_i+e_j$, for various $i$ and $j$, the equivalence holds.
\end{proof}

We observe that for a graph with multiple components, the latices in the third condition in Proposition \ref{prop:vitulli1} are the direct sums of the lattices for each component.  Therefore, Lemma \ref{lem:signedLattice} can be applied to a graph component-by-component.  We use this approach to prove the condition for a signed graph to satisfy Serre's $R_1$ condition. 

\begin{theorem}\label{thm:signedSerres}
Let $G$ be a signed graph.
$k[G]$ satisfies Serre's $R_1$ condition if and only if every facet subgraph $H$ of $G$ satisfies the following inequality:
\[\comp(H) \leq \comp(G) +1.\]
\end{theorem}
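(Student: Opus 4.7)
The strategy is to apply Proposition~\ref{prop:vitulli1} facet by facet, using Proposition~\ref{prop:signedConeFacets} to parameterize facets by facet subgraphs. For a facet subgraph $H$ with distinguished bipartite component $H' = L \cup R$, the support form is either $e_L^\ast - e_R^\ast$ or its halving, and the discussion preceding Lemma~\ref{lem:signedLattice} already verifies the first two bullets of Proposition~\ref{prop:vitulli1}. Thus the theorem reduces to determining when the lattice equality $\Z(C \cap F) = \Z C \cap \mathcal{H}_F$ holds, where $C = \Z_+ \rho(E(G))$ and $F = \cone(\mathcal{P}_H)$.

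By Observation~\ref{obs:bipartitecount}, passing from $G$ to a facet subgraph $H$ alters exactly one component $G_0$ of $G$; it decomposes within $H$ as $H' \cup H_1 \cup \cdots \cup H_k$, where $H'$ is the new bipartite piece. Since the other components of $G$ appear unchanged in $H$, they contribute identical summands to both lattices, so the comparison reduces to $\Z^{V(G_0)}$. I would then apply Lemma~\ref{lem:signedLattice} to each of $H', H_1, \ldots, H_k$ to describe $\Z(C \cap F)$, and to $G_0$ together with the linear constraint imposed by the support form to describe $\Z C \cap \mathcal{H}_F$. This produces two explicit systems of linear and parity conditions that I can compare directly.

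Two cases arise. If $G_0$ is bipartite, Observation~\ref{obs:bipartitecount} forces $k = 1$ with $H_1$ bipartite, and the bipartite equation for $G_0$ together with $\sum_L a = \sum_R a$ is equivalent to the two bipartite equations for $H'$ and $H_1$, so both lattices coincide. If $G_0$ is non-bipartite, then each $H_i$ is non-bipartite, and $\sum_{V(G_0)} a \in 2\Z$ together with $\sum_L a = \sum_R a$ yields only the joint parity $\sum_{i=1}^k \sum_{V(H_i)} a \in 2\Z$. When $k = 1$ this collapses to the required parity on $V(H_1)$ alone, and the lattices match; when $k \geq 2$ the joint parity does not force the parity on each $V(H_i)$ individually, and the vector $a = e_{v_1} + e_{v_2}$ with $v_1 \in V(H_1)$ and $v_2 \in V(H_2)$ lies in $\Z C \cap \mathcal{H}_F$ but not in $\Z(C \cap F)$, furnishing an explicit obstruction.

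Combining the two cases, the lattice equality holds at every facet exactly when $k \leq 1$ for every facet subgraph, which by the dichotomy above is equivalent to $\comp(H) \leq \comp(G) + 1$. The main delicacy is the case analysis distinguishing bipartite from non-bipartite $G_0$ and the construction of the parity witness for $k \geq 2$; once Lemma~\ref{lem:signedLattice} and the support form from Section~\ref{sec:Dimension} are in hand, the remaining verifications reduce to routine linear-algebraic bookkeeping.
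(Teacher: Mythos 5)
Your proposal is correct and follows essentially the same route as the paper's proof: reduce to the third condition of Proposition~\ref{prop:vitulli1}, localize to the single altered component via Observation~\ref{obs:bipartitecount}, apply Lemma~\ref{lem:signedLattice} piecewise with the bipartite/non-bipartite dichotomy, and exhibit $e_{v_1}+e_{v_2}$ as the lattice obstruction when the altered component splits into two or more non-bipartite pieces. The only cosmetic difference is that you describe both lattices explicitly and compare, whereas the paper takes an arbitrary element of $\Z\rho(E(G))\cap\mathcal{H}_F$ and decomposes it; the substance is identical.
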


\begin{proof}
We use the conditions in Proposition \ref{prop:vitulli1} to justify Serre's $R_1$ condition.  By the discussion above, we have seen that for any facet subgraph $H$ of $G$, there is a linear form satisfying the first two conditions of Proposition \ref{prop:vitulli1}, so we focus on the third condition.

Suppose, first, that $\comp(H)>\comp(G) +1$.  In this case, there are more non-bipartite components in $H$ than in $G$.  Hence, there are two non-bipartite components $H_1$ and $H_2$ in $H$ which are in the same (non-bipartite) component of $G$.  Let $i\in H_1$ and $j\in H_2$.  By Lemma \ref{lem:signedLattice}, we know that $e_i+e_j\in\Z\rho(E(G))$.  Moreover, $e_i+e_j$ is in the supporting hyperplane for $F$ since $\langle e_L^\ast-e_R^\ast,e_i+e_j\rangle=0$ as $i,j\not\in L\cup R$ from Proposition \ref{prop:signedConeFacets}.  However, $e_i+e_j$ is not in $\Z\rho(E(H))$ since $i$ and $j$ are in distinct components.  More precisely, $e_i$ and $e_j$ would each need to be in $\Z\rho(E(H))$, which contradicts Lemma \ref{lem:signedLattice}.  Hence, $G$ fails the conditions in Proposition \ref{prop:vitulli1}, and, thus, does not satisfy Serre's $R_1$ condition.

Suppose, now, that $\comp(H)\leq\comp(G)+1$.  Since $H$ has at least as many components as $G$, $H$ has either the same number of components of $G$ or one more component.  Let $H_1$ be a bipartite component of $H$ which is not a component of $G$, and let $G_1$ be the component of $G$ containing $H_1$.  By Observation \ref{obs:bipartitecount}, we know that only component $G_1$ of $G$ is changed in the construction of $H$.  Therefore, we reduce to the case where $G$ is connected, i.e., $G=G_1$ and $H$ is the corresponding subgraph of $G_1$.  In what follows, we assume that $G$ has $m$ vertices.

Since $H_1$ is bipartite, let $H_1=L_1\cup R_1$ be a bipartition of $H_1$.  We recall, from the proof of Proposition \ref{prop:signedConeFacets}, that the facet $\cone(\mathcal{P}_H)$ is formed by the intersection of $\cone(\PG)$ with the hyperplane $\mathcal{H}=\{x\in\mathbb{R}^m:\langle e_{L_1}^\ast-e_{R_1}^\ast,x\rangle=0\}$.  Let $a\in \Z\rho(E(G))\cap\mathcal{H}\subseteq\Z^m$.  In order to confirm the third condition of Proposition \ref{prop:vitulli1}, we must show that $a\in\Z\rho(E(H))$.  Since $a\in\mathcal{H}$, we know that 
\begin{equation}\label{eq:balancingR1}
\sum_{i\in L_1}a_i=\sum_{j\in R_1}a_j
\end{equation}
since the weights on $L_1$ and $R_1$ must balance for the inner product above to vanish.  By Lemma \ref{lem:signedLattice}, this equality implies that for the bipartite graph $H_1$, 
$$\sum_{i\in H_1}a_ie_i\in\Z\rho(E(H_1))\subseteq \Z\rho(E(H)).$$  
Therefore, our goal is to show that the remaining portion of $a$ is also in $\Z\rho(E(H))$, i.e., we must show that
\begin{equation}\label{eq:difference}
a':=a-\sum_{i\in H_1}a_ie_i\in\Z\rho(E(H_2)).
\end{equation}

By appealing to Observation \ref{obs:bipartitecount}, we consider three cases:  If $\comp(H)=\comp(G)$, then $G$ and $H_1$ have the same vertices and $a'$ is zero.  If $\comp(H)=\comp(G)+1$, then $H$ consists of two components, let $H_2$ be the complementary component to $H_1$.  We observe that the expression in Equation (\ref{eq:difference}) can be rewritten as
$$
a'=\sum_{j\in H_2}a_je_j.
$$
We now proceed by considering two cases, depending on whether $G$ is bipartite or not.

Assume, first, that $G$ is bipartite, then there is a bipartition $G=L\cup R$ so that $L_1=L\cap H_1$ and $R_1=R\cap H_1$.   Moreover, since $G$ is bipartite, it follows that $H_2$ is also bipartite.  Let $L_2=L\cap H_2$ and $R_2=R\cap H_2$ form a bipartition of $H_2$.  By Lemma \ref{lem:signedLattice}, since $G$ is bipartite, it follows that 
$$
\sum_{i\in L}a_i=\sum_{j\in R}a_j.
$$ 
By appealing to Equation (\ref{eq:balancingR1}), it follows that 
$$
\sum_{i\in L_2}a_i=\sum_{j\in R_2}a_j.
$$
By Lemma \ref{lem:signedLattice}, since $H_2$ is bipartite, $a'\in\Z\rho(E(H_2))$.

On the other hand, if $G$ is not bipartite, then $H_2$ is not bipartite because otherwise $H$ has too many bipartite components.  In this case, by Lemma \ref{lem:signedLattice},
$$
\sum_{j\in H_2}a_j
$$
is even.  By Lemma \ref{lem:signedLattice}, since $H_2$ is not bipartite, this implies that $a'\in\Z\rho(E(H_2))$.

Since, in either case, we find that $\Z\rho(E(G))\cap\mathcal{H}=\Z\rho(E(H_1))\oplus\Z\rho(E(H_2))$ and the third condition of Proposition \ref{prop:vitulli1} holds.  Therefore, $k[G]$ satisfies the conditions in Proposition \ref{prop:vitulli1}, and, thus, satisfies Serre's $R_1$ condition.
\end{proof}

\section{Serre's \texorpdfstring{$R_1$}{R1} Condition for Mixed Signed, Directed Graphs}
\label{sec:mixedSerres}

In Section \ref{sec:Geometry}, we provided a combinatorial characterization of Serre's $R_1$ condition for signed graphs.  This provides a characterization for all quadratic-monomial generated rings where the generators are of the form $x_ix_j$ or $x_i^{-1}x_j^{-1}$.  This section contains the main results of this paper, where we extend the characterization to all quadratic-monomial generated domains, i.e., we allow generators of the form $x_i^{-1}x_j$.  We follow the approach of \cite{LipmanBurr:2016} for reducing the general case to the case of signed graphs.  We begin by briefly reviewing this reduction.

\subsection{Mixed Signed, Directed Graphs}

In \cite{LipmanBurr:2016}, we observe that the combinatorial object that corresponds to a monomial of the form  $x_i^{-1}x_j$ is a directed edge.  Therefore, we define mixed signed, directed graphs as follows:

\begin{definition}\label{def:MixedGraphs}
A {\em mixed signed, directed graph} is a pair $G=(V,E)$ of {\em vertices}, $V$, and {\em edges}, $E$, where $E$ consists of a set of signed edges and {\em directed edges} between distinct vertex pairs.
As for signed graphs, we denote positive and negative edges between $i$ and $j$ as $+ij$ and $-ij$, respectively.
A directed edge from $i$ to $j$ is denoted $(i,j)$.
\end{definition}

Note that, for any vertex $i$, there may be positive and negative loops at $i$ denoted $+ii$ and $-ii$ respectively, but not directed loops (since directed loops correspond to the indentity).
Also, for a pair $i$ and $j$ of distinct vertices, any subset of the four possible edges $+ij,-ij,(i,j)$, and $(j,i)$ can be edges in $G$.  We now recall the definitions pertaining to edge rings in this case:

\begin{definition}\label{def:MixedPolytopes}
Let $G$ be a mixed signed, directed graph with $n$ vertices, possibly with loops, and multiple edges.
Define $\rho:E(G)\rightarrow \R^n$ as $\rho(e)=\sgn(e)(e_i + e_j) \in \R^n$ when $e=\sgn(ij)ij$ is a signed edge of the graph and as $\rho(e)=e_j-e_i$ when $e=(i,j)$ is a directed edge of the graph.
\end{definition}

The edge polytope and corresponding semigroups and edge rings are defined in analogously to the definitions in Section \ref{sec:SemiGPs}.  In \cite{LipmanBurr:2016}, we showed how to construct a signed graph from a mixed signed, directed graph as follows:

\begin{definition}\label{def:AugmentedGraph}
Let $G$ be a mixed signed, directed graph.
The {\em augmented signed graph} $\widetilde{G}$ of $G$ is a signed graph where each directed edge $(i,j)$ in $G$ is replaced by a vertex $t_{(i,j)}$ and a pair of edges $-it_{(i,j)}$ and $+t_{(i,j)}j$.  The new vertex $t_{(i,j)}$, adjacent to only $i$ and $j$, is called an {\em artificial vertex}.
\end{definition}

Suppose that $G$ is a mixed signed, directed graph with $n$ vertices and $m$ artificial vertices.  We use the set $\{\widetilde{e}_1,\dots,\widetilde{e}_n,\widetilde{e}_{t_1},\dots,\widetilde{e}_{t_m}\}$ as the basis of the codomain of the map $\rho:E(\widetilde{G})\rightarrow\mathbb{R}^{n+m}$ to distinguish it from the basis $\{e_1,\dots,e_n\}$ for the codomain of $\rho:E(G)\rightarrow\R^n$.  Using this definition, we extend the definitions for components and bipartite components to mixed signed directed graphs.

\begin{definition}\label{def:MixedComponent}
Let $G$ be a mixed signed, directed graph.  A subgraph $H$ is a {\em component} or {\em bipartite component} of $G$ if $\widetilde{H}$ is a component or bipartite component of the augmented signed graph $\widetilde{G}$, respectively.
\end{definition}

We note that $H$ is a component of $G$ if and only if the underlying (undirected and unsigned) graph of $H$ is a component of the underlying graph of $G$.  However, this equivalence cannot be generalized to bipartite components due to artificial vertices. 

We study the properties of a mixed signed, directed graph by studying the corresponding properties on its augmented signed graph.  For example, if one considers only integral edge weights case in the proof of \cite[Lemma 6]{LipmanBurr:2016}, we achieve the following result:

\begin{corollary}\label{cor:FF:Equality}
Let $G$ be a mixed signed, directed graph with augmented signed graph $\widetilde{G}$.  Consider $k[G]$ as a subring of $k[x_1^{\pm 1},\dots,x_n^{\pm 1},t_1^{\pm 1},\dots,t_m^{\pm 1}]$.  Let $k(G)$ denote the field of fractions of $k[G]$.
Then $k(\widetilde{G})\cap k(x_1,\dots,x_n) = k(G)$.
\end{corollary}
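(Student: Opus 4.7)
The plan is to establish the two inclusions separately. The forward containment follows directly from the construction of $\widetilde{G}$, while the reverse reduces to a combinatorial lattice identity combined with a general principle about fraction fields of semigroup algebras.

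For $k(G)\subseteq k(\widetilde{G})\cap k(x_1,\dots,x_n)$, I would first observe that every generator of $k[G]$ lies in $k[\widetilde{G}]$: signed-edge generators of $k[G]$ are identical to signed-edge generators of $k[\widetilde{G}]$, while the generator $x_i^{-1}x_j$ corresponding to a directed edge $(i,j)$ of $G$ factors as the product $(x_i^{-1}t_{(i,j)}^{-1})(t_{(i,j)}x_j)$ of the two generators of $k[\widetilde{G}]$ coming from the edges $-it_{(i,j)}$ and $+t_{(i,j)}j$. Combined with $k[G]\subseteq k[x_1^{\pm 1},\dots,x_n^{\pm 1}]\subseteq k(x_1,\dots,x_n)$, passing to fraction fields yields the easy direction.

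For the reverse direction, I would use the fact that for any affine semigroup $C\subseteq\Z^N$ one has $\mathrm{Frac}(k[C])=\mathrm{Frac}(k[\Z C])$, together with the general lattice-intersection principle that $\mathrm{Frac}(k[L_1])\cap\mathrm{Frac}(k[L_2])=\mathrm{Frac}(k[L_1\cap L_2])$ inside $k(x_1,\dots,x_N)$ for sublattices $L_1,L_2\subseteq\Z^N$. Applied with $L_1=\Z\rho(E(\widetilde{G}))$ and $L_2=\Z^n$, embedded in $\Z^{n+m}$ as the sublattice with zero $t$-coordinates, this reduces the corollary to the combinatorial identity
\[
\Z\rho(E(\widetilde{G}))\cap\Z^n = \Z\rho(E(G)).
\]
To establish this identity, take $v=\sum_e c_e\rho_{\widetilde{G}}(e)\in\Z\rho(E(\widetilde{G}))$ with vanishing $t$-coordinates. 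For each directed edge $(i,j)$ of $G$, the coefficient of $\widetilde{e}_{t_{(i,j)}}$ in $v$ equals $c_{+t_{(i,j)}j}-c_{-it_{(i,j)}}$; setting this to zero forces $c_{+t_{(i,j)}j}=c_{-it_{(i,j)}}=:d_{(i,j)}$, so the contribution from the two edges associated to $(i,j)$ regroups as $d_{(i,j)}(-\widetilde{e}_i+\widetilde{e}_j)$, which projects onto $d_{(i,j)}\rho_G((i,j))\in\Z\rho(E(G))$. Signed-edge contributions map onto the corresponding generators of $\rho(E(G))$ directly. The reverse containment uses the same pairing read backwards, lifting each signed or directed generator of $\Z\rho(E(G))$ to $\Z\rho(E(\widetilde{G}))$.

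The main obstacle is justifying the general fraction-field intersection principle. I would verify it by choosing a basis of $L_1\cap L_2$, extending it to bases of $L_1$ and $L_2$ separately, and using algebraic independence of the corresponding monomials in $k(x_1,\dots,x_N)$ to identify each $\mathrm{Frac}(k[L_i])$ with a purely transcendental extension whose intersection matches $\mathrm{Frac}(k[L_1\cap L_2])$. Alternatively, this is precisely what the proof of \cite[Lemma~6]{LipmanBurr:2016} accomplishes in the broader setting of real edge weights, and restricting that argument to the integer-weight case yields the corollary without further work.
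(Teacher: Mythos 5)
The paper's own justification of this corollary is exactly your closing sentence: it invokes the proof of Lemma~6 of \cite{LipmanBurr:2016} restricted to integral edge weights and offers nothing further, so your fallback is verbatim the paper's argument. Your self-contained route is a genuinely more explicit alternative, and its combinatorial core is correct: the factorization $x_i^{-1}x_j=(x_i^{-1}t_{(i,j)}^{-1})(t_{(i,j)}x_j)$ gives the easy inclusion, the reduction via $\mathrm{Frac}(k[C])=\mathrm{Frac}(k[\Z C])$ is sound, and your computation that vanishing of the $t$-coordinates forces $c_{+t_{(i,j)}j}=c_{-it_{(i,j)}}$ correctly establishes $\Z\rho(E(\widetilde{G}))\cap\Z^n=\Z\rho(E(G))$, using that each artificial vertex lies on exactly its two incident edges. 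The one step I would not accept as written is your justification of the fraction-field intersection principle by extending a basis of $L_1\cap L_2$ to bases of $L_1$ and $L_2$: the intersection need not be a direct summand of either lattice (e.g.\ $L_1=\Z(1,0)$ and $L_2=\Z(2,0)+\Z(0,1)$ give $L_1\cap L_2=\Z(2,0)$, whose basis does not extend to a basis of $L_1$), so the chosen monomials need not assemble into the transcendence bases your argument requires. The principle itself is true, but it needs either a $\Z^{n+m}$-grading argument or the Galois-type correspondence for subtori, or --- as you and the paper both ultimately do --- the proof of the cited Lemma~6. Since you explicitly flag this as the obstacle and supply the citation that closes it, the proposal as a whole stands.
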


\subsection{Serre's \texorpdfstring{$R_1$}{R1} condition for \texorpdfstring{$k[G]$}{k[G]}}

In this section, we generalize the results from Section \ref{sec:Geometry} to apply to mixed signed, directed graphs.  In particular, this allows us to provide conditions for which quadratic-monomial generated rings have Serre's $R_1$ condition in terms of structural properties of the graph $G$.  Throughout this section, we let $\pi:\widetilde{G}\rightarrow G$ be the projection map that ignores the artificial vertices.

\begin{corollary}\label{cor:mixedBipHyper}
Let $G$ be a mixed signed, directed graph and suppose that $\PG$ is contained within the hyperplane defined by $\langle v^*,x\rangle =0$.  Let $\widetilde{G}$ be the augmented signed graph for $G$.  Suppose that $\widetilde{G}$ has bipartite components $\widetilde{G}_1,\dots,\widetilde{G_r}$ in $\widetilde{G}$ with associated dual vectors $\widetilde{e}_{L_1}^*-\widetilde{e}_{R_1}^*,\dots,\widetilde{e}_{L_r}^*-\widetilde{e}_{R_r}^*$.  Then $v^\ast$ is a linear combination of $e^\ast_{\pi(L_1)}-e^\ast_{\pi(R_1)},\dots,e^\ast_{\pi(L_r)}-e^\ast_{\pi(R_r)}$.
\end{corollary}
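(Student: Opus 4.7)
The plan is to lift the problem to the augmented signed graph $\widetilde{G}$, apply Lemma~\ref{lem:BipHyper} there, and then project the conclusion back down. Since $\widetilde{G}$ is an ordinary signed graph, Lemma~\ref{lem:BipHyper} is already available to us.

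\medskip

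\noindent\textbf{Step 1: Extend $v^\ast$ to a dual vector on $\R^{n+m}$.}
I would define $\widetilde{v}^\ast\in(\R^{n+m})^\ast$ by setting $(\widetilde{v}^\ast)_i=(v^\ast)_i$ for every original vertex $i\in V(G)$ and $(\widetilde{v}^\ast)_{t_{(i,j)}}=-(v^\ast)_i$ for every artificial vertex $t_{(i,j)}$ corresponding to a directed edge $(i,j)$ of $G$. To see this is well-defined, note that the directed edge $(i,j)$ satisfies $\rho((i,j))=e_j-e_i$ in $\R^n$, so the hypothesis $\langle v^\ast,\rho((i,j))\rangle=0$ forces $(v^\ast)_i=(v^\ast)_j$; hence the choice $-(v^\ast)_i$ is the same as $-(v^\ast)_j$.

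\medskip

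\noindent\textbf{Step 2: Verify that the hyperplane $\{\langle \widetilde{v}^\ast,x\rangle=0\}$ contains $\mathcal{P}_{\widetilde{G}}$.}
There are two types of edges to check in $\widetilde{G}$. For a signed edge $\sigma ij$ inherited from $G$, $\rho(\sigma ij)=\sigma(\widetilde{e}_i+\widetilde{e}_j)$, and $\langle\widetilde{v}^\ast,\rho(\sigma ij)\rangle=\sigma((v^\ast)_i+(v^\ast)_j)=\langle v^\ast,\rho(\sigma ij)\rangle=0$ by hypothesis. For the pair of edges $-it_{(i,j)}$ and $+t_{(i,j)}j$ that replace a directed edge $(i,j)$, a direct computation using $(\widetilde{v}^\ast)_{t_{(i,j)}}=-(v^\ast)_i=-(v^\ast)_j$ gives $\langle\widetilde{v}^\ast,\rho(-it_{(i,j)})\rangle=-((v^\ast)_i-(v^\ast)_i)=0$ and analogously for the other edge.

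\medskip

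\noindent\textbf{Step 3: Apply Lemma~\ref{lem:BipHyper} and project.}
Since $\widetilde{G}$ is a signed graph whose bipartite components are the $\widetilde{G}_1,\dots,\widetilde{G}_r$, Lemma~\ref{lem:BipHyper} yields scalars $c_1,\dots,c_r$ with
\[
\widetilde{v}^\ast=\sum_{i=1}^r c_i\bigl(\widetilde{e}_{L_i}^\ast-\widetilde{e}_{R_i}^\ast\bigr).
\]
Now restrict both sides to the coordinates indexed by $V(G)$. On the left, the restriction is exactly $v^\ast$. On the right, for each $i$ the restriction of $\widetilde{e}_{L_i}^\ast-\widetilde{e}_{R_i}^\ast$ to $V(G)$ simply discards the contributions from artificial vertices and so equals $e^\ast_{\pi(L_i)}-e^\ast_{\pi(R_i)}$. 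Combining these gives the desired expression for $v^\ast$.

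\medskip

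\noindent\textbf{Main obstacle.}
The only subtlety is Step~1: making sure the extension $\widetilde{v}^\ast$ is consistently defined at each artificial vertex, which requires using the hypothesis on directed edges to conclude $(v^\ast)_i=(v^\ast)_j$. Once that consistency check is in place, the remainder of the argument is bookkeeping and a direct invocation of the signed-graph lemma.
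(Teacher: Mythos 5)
Your proposal is correct and follows essentially the same route as the paper: both define the lift $\widetilde{v}^\ast$ by assigning the artificial vertex $t_{(i,j)}$ the weight $-(v^\ast)_i$ (using $(v^\ast)_i=(v^\ast)_j$ forced by the directed edge), verify that the resulting hyperplane contains $\mathcal{P}_{\widetilde{G}}$, invoke Lemma~\ref{lem:BipHyper}, and project back to the non-artificial coordinates. No gaps.
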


\begin{proof}
Observe that for each directed edge $(i,j)\in E(G)$, $(v^\ast)_i=(v^\ast)_j$ since $\rho(i,j)=e_j-e_i$.  Let $\widetilde{v}^\ast$ be the dual vector 
$$
\widetilde{v}^\ast=\sum_{i\in G} (v^\ast)_i\widetilde{e}_i^\ast-\sum_{(i,j)\in E(G)}(v^\ast)_i\widetilde{e}_{t_{(i,j)}}^\ast.
$$
We observe that, by construction, for non-artificial vertices $i$, $(\widetilde{v}^\ast)_i=(v^\ast)_i$.  Moreover, the hyperplane defined by $\langle \widetilde{v}^*,y\rangle =0$ contains $\mathcal{P}_{\widetilde{G}}$ since every edge in both $G$ and $\widetilde{G}$ is in the hyperplane by the definition of $v^\ast$, and, for every edge with an artificial vertex as an endpoint, the values of $\widetilde{v}^\ast$ cancel at the endpoints.  By Lemma \ref{lem:BipHyper}, we know that $\widetilde{v}^\ast$ is a linear combination of the dual vectors $\widetilde{e}_{L_1}^*-\widetilde{e}_{R_1}^*,\dots,\widetilde{e}_{L_r}^*-\widetilde{e}_{R_r}^*$.  Therefore, by restricting our attention to non-artificial vertices, the result follows.
\end{proof}

Since our interest is in facets of $\cone(\PG)$, we must compute the dimension of this cone.  In particular, using Corollary \ref{cor:mixedBipHyper}, we get the following formula:

\begin{proposition}\label{prop:mixedConeDimension}
Let $G$ be a mixed signed, directed graph on $n$ vertices, then
\[\dim \cone(\PG) = n -\bicomp(G).\]
\end{proposition}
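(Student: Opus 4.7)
The plan is to compute the codimension of $\cone(\PG)$ in $\R^n$ by exhibiting an explicit basis of its annihilator of size exactly $\bicomp(G)$. Writing $V^\perp\subseteq(\R^n)^\ast$ for the space of dual vectors vanishing on $\PG$, the target formula is equivalent to $\dim V^\perp=\bicomp(G)$. Let $\widetilde{G}_1,\dots,\widetilde{G}_r$ be the bipartite components of $\widetilde{G}$ with bipartitions $\widetilde{G}_i=L_i\cup R_i$; by Definition~\ref{def:MixedComponent}, $r=\bicomp(G)$. The candidate basis is $\mathcal{B}=\{\sigma_i:=e^\ast_{\pi(L_i)}-e^\ast_{\pi(R_i)}\}_{i=1}^{r}$. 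Corollary~\ref{cor:mixedBipHyper} directly yields the containment $V^\perp\subseteq\operatorname{span}(\mathcal{B})$, so the work reduces to (i) showing each $\sigma_i$ lies in $V^\perp$, and (ii) showing $\mathcal{B}$ is linearly independent.

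For (i), I would check $\langle\sigma_i,\rho(e)\rangle=0$ for every $e\in E(G)$ by casework on the edge type. For a signed edge $e$ with endpoints $j,k$: if neither is in $\widetilde{G}_i$ the pairing is immediately zero; otherwise, since $\widetilde{G}_i$ is a component of $\widetilde{G}$, both endpoints lie in $\widetilde{G}_i$, and bipartiteness forces $\{j,k\}$ to split between $L_i$ and $R_i$, giving pairing $\sgn(e)(1-1)=0$. For a directed edge $(j,k)$, either neither endpoint is in $\widetilde{G}_i$ (pairing zero), or the artificial vertex $t=t_{(j,k)}$ and the two edges $-jt,+tk$ all lie inside $\widetilde{G}_i$; then the two auxiliary edges force $j$ and $t$ onto opposite sides of the bipartition, and likewise $t$ and $k$, so $j$ and $k$ end up on the common side opposite $t$, which makes $\langle\sigma_i,e_k-e_j\rangle=0$.

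For (ii), distinct components of $\widetilde{G}$ have disjoint vertex sets, so the supports $\pi(L_i)\cup\pi(R_i)\subseteq V(G)$ are pairwise disjoint. Moreover, no bipartite component of $\widetilde{G}$ consists entirely of artificial vertices, since every artificial vertex has degree two with both neighbors non-artificial. Thus each $\sigma_i$ is nonzero, and the coefficient of $e^\ast_v$ for any $v\in\pi(L_i)\cup\pi(R_i)$ in a vanishing linear combination $\sum c_i\sigma_i=0$ forces $c_i=0$. Combined with the containment above, this shows $\mathcal{B}$ is a basis of $V^\perp$ of size $\bicomp(G)$, and $\dim\cone(\PG)=n-\bicomp(G)$ follows.

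I expect the main obstacle to be the directed-edge subcase of step~(i), where one must carefully use both the bipartiteness of $\widetilde{G}_i$ and the specific signs on the auxiliary edges $-jt$ and $+tk$ to deduce that $j$ and $k$ sit on a common side of the bipartition; the remaining verifications are either formal or immediate from the signed graph case already handled in Observation~\ref{thm:dimFormula}.
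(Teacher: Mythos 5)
Your proof is correct and follows essentially the same route as the paper: the paper likewise takes the dual vectors $e^\ast_{\pi(L_i)}-e^\ast_{\pi(R_i)}$ as a basis for the annihilator of $\cone(\PG)$, citing Corollary~\ref{cor:mixedBipHyper} for the spanning containment and disjointness of supports for independence. You merely fill in details the paper leaves implicit, namely that each $\sigma_i$ actually annihilates every $\rho(e)$ (including the directed-edge case, where bipartiteness of $\widetilde{G}_i$ places the two real endpoints on the same side) and that no $\sigma_i$ vanishes because no bipartite component of $\widetilde{G}$ consists solely of artificial vertices.
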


\begin{proof}
The proof mirrors the computation for Observation \ref{thm:dimFormula}.  We observe that the dual vectors in Corollary  \ref{cor:mixedBipHyper} are spanning; moreover, they are independent because the dual vectors of the form $e^\ast_{\pi(L)}-e^\ast_{\pi(R)}$ have disjoint support.  Therefore, the vectors in Corollary  \ref{cor:mixedBipHyper} form a basis for the set of hyperplanes containing $\cone(\PG)$.
\end{proof}

We now extend the definition of facet subgraphs to mixed signed, directed graphs.

\begin{definition}\label{def:facetsubgraphdirected}
Let $G$ be a mixed signed, directed graph.  A subgraph $H$ of $G$ is a {\em facet subgraph} if $H$ satisfies the following properties:
\begin{enumerate}
\item $H$ has exactly one more bipartite component than $G$ and
\item For any bipartite component $H'$ of $H$ which is not a component of $G$, there is a bipartition $\widetilde{H}'=\widetilde{L}\cup \widetilde{R}$ such that every edge $e\in G\setminus H$ is one of the following forms:  Let $L=\pi(\widetilde{L})$ and $\pi(\widetilde{R})$, i.e., $L$ is the set of nonartificial vertices of $\widetilde{L}$ in $H'$ and similarly for $R$.
\begin{itemize}
\item $e$ is a positive edge incident to $L$, but not $R$, 
\item $e$ is a negative edge incident to $R$, but not $L$,
\item $e=(i,j)$ is a directed edge such that $j\in L$, but $i\not\in L$, or
\item $e=(i,j)$ is a directed edge such that $i\in R$, but $j\not\in R$.
\end{itemize}
\end{enumerate}
\end{definition}
Observe that, due to artificial vertices, $L$ and $R$ do not form a bipartition of $H'$.  In particular, both endpoints of directed edges are in the same set.  As in the case of signed graphs, facet subgraphs characterize the facets of $\cone(\PG)$.

\begin{proposition}\label{prop:mixedConeFacets}
Let $G$ be a mixed signed, directed graph.  $F$ is a facet of $\cone(\PG)$ iff there exists a facet subgraph $H$ such that $\cone(\mathcal{P}_H)=F$.
\end{proposition}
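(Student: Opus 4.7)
The plan is to adapt the proof of Proposition \ref{prop:signedConeFacets} by routing every step through the augmented signed graph $\widetilde{G}$ and its projection $\pi$, invoking Proposition \ref{prop:mixedConeDimension} and Corollary \ref{cor:mixedBipHyper} in place of the signed-graph analogues. The guiding principle is that for any bipartite component $H'$ of a subgraph $H \subseteq G$, an augmented bipartition $\widetilde{H}' = \widetilde{L} \cup \widetilde{R}$ projects to disjoint sets $L = \pi(\widetilde{L})$, $R = \pi(\widetilde{R})$ on the non-artificial vertices of $H'$, and the dual form $\sigma := e^*_L - e^*_R$ on $\R^n$ is the natural support-form candidate.

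For the forward direction, starting from a facet subgraph $H$, Proposition \ref{prop:mixedConeDimension} yields $\dim \cone(\mathcal{P}_H) = \dim \cone(\PG) - 1$, so it suffices to exhibit a supporting hyperplane whose vanishing set on $\cone(\PG)$ is exactly $\cone(\mathcal{P}_H)$. I would take $\mathcal{H} = \{x \in \R^n : \langle \sigma, x\rangle = 0\}$ and verify the pairing $\langle \sigma, \rho(e)\rangle$ edge-by-edge: signed edges in $H'$ have endpoints on opposite augmented sides and pair to $0$; a directed edge $(i,j)$ in $H$ with both endpoints in $H'$ forces $i$ and $j$ to lie on the same projected side (because the artificial vertex $t_{(i,j)}$ sits opposite both $i$ and $j$ due to the edges $-it_{(i,j)}$ and $+t_{(i,j)}j$), and likewise pairs to $0$; edges of $H$ in other components pair to $0$ since their endpoints avoid $L \cup R$; and each of the four allowed edge types in $G \setminus H$ from Definition \ref{def:facetsubgraphdirected} pairs to $1$ or $2$. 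This shows $\mathcal{H}$ is a supporting hyperplane whose vanishing cone is precisely $\cone(\mathcal{P}_H)$.

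For the reverse direction, let $F = \mathcal{H} \cap \cone(\PG)$ be a facet with $\mathcal{H} = \{x : \langle v^*, x\rangle = 0\}$ and $\langle v^*, \cdot\rangle \geq 0$ on the cone, and let $H$ be the subgraph of edges $e$ satisfying $\langle v^*, \rho(e)\rangle = 0$, so that $F = \cone(\mathcal{P}_H)$. Proposition \ref{prop:mixedConeDimension} immediately gives $\bicomp(H) = \bicomp(G)+1$, settling condition (1). For condition (2), fix a bipartite component $H'$ of $H$ that is not a component of $G$, with bipartition $\widetilde{H}' = \widetilde{L} \cup \widetilde{R}$. Applying Corollary \ref{cor:mixedBipHyper} to $\mathcal{P}_H$ expresses $v^*$ as a linear combination of the projected bipartite characteristic vectors coming from $\widetilde{H}$, and an independence argument (the vectors inherited from $\widetilde{G}$ already annihilate all of $\cone(\PG)$) shows that the coefficient attached to $e^*_L - e^*_R$ must be nonzero. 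After swapping $\widetilde{L}$ and $\widetilde{R}$ if needed, this coefficient is positive, so for edges $e \in G \setminus H$ that are not annihilated by the older bipartite forms, the sign of $\langle v^*, \rho(e)\rangle$ matches the sign of $\langle e^*_L - e^*_R, \rho(e)\rangle$. A case analysis on where the endpoints of $e$ sit (in $L$, in $R$, or outside $H'$) then forces $e$ into one of the four allowed forms in Definition \ref{def:facetsubgraphdirected}; a directed edge $(i,j)$ with $i \in L$ and $j \in R$ would produce pairing $-2$ and is ruled out, while a signed edge with one endpoint in $L$ and one in $R$ would pair to $0$ and would therefore already lie in $H$.

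The main obstacle is the asymmetric role of directed edges, together with the fact that $L$ and $R$ only form a bipartition of $H'$ after suppressing artificial vertices; both endpoints of a directed edge in $H'$ live on the \emph{same} projected side rather than on opposite sides. Managing this asymmetry forces the enlarged four-case structure of Definition \ref{def:facetsubgraphdirected}, and each of the four cases must be independently verified in the forward direction and independently produced in the reverse direction, with explicit pairing computations ruling out the forbidden orientations.
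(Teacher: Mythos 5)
Your proposal is correct and follows essentially the same route as the paper's proof, which itself defers most details to Proposition \ref{prop:signedConeFacets}: use $e^*_L-e^*_R$ (with $L,R$ obtained by projecting an augmented bipartition) as the supporting form, check the pairing values $0,1,2$ edge-by-edge including the directed-edge cases, and in the reverse direction express $v^*$ via Corollary \ref{cor:mixedBipHyper} and run the case analysis to recover the four allowed edge types. Your explicit treatment of the directed-edge asymmetry (both endpoints of a directed edge landing on the same projected side, and the $-2$ pairing ruling out $(i,j)$ with $i\in L$, $j\in R$) correctly fills in exactly the details the paper leaves to the reader.
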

\begin{proof}
The proof is similar to the proof of Proposition \ref{prop:signedConeFacets}, so we leave most of the details to the interested reader.  If $H$ is a facet subgraph, then it has a bipartite component $H'$.  Let $H'=L\cup R$ be as in the definition of a facet subgraph.  In this case, the hyperplane $\mathcal{H}=\{x\in\mathbb{R}^n:\langle e^\ast_L-e^\ast_R,x\rangle=0\}$ is a supporting hyperplane for $\cone(\PG)$ by Corollary \ref{cor:mixedBipHyper}, $\cone(\mathcal{P}_H)=\mathcal{H}\cap\cone(\PG)$, and for any edge $e\in G\setminus H$, $\langle e^\ast_L-e^\ast_R,\rho(e)\rangle=1,2$.

For the other direction, let $\mathcal{H}$ be a supporting hyperplane for $\cone(\PG)$ and $\mathcal{F}=\mathcal{H}\cap\cone(\PG)$ be a facet of $\cone(\PG)$.  Then, there is a dual vector $v^\ast$ such that $\mathcal{H}=\{x\in\mathbb{R}^n:\langle v^\ast,x\rangle=0\}$ and $\langle v^\ast,x\rangle\geq 0$ for all $x\in\cone(\PG)$.  Let $H$ be the subgraph of $G$ consisting of those edges $e\in G$ so that $\rho(e)\in\mathcal{H}$.  Then, by following the proof of Proposition \ref{prop:signedConeFacets}, it follows that, by reversing $R$ and $L$, if necessary, for all $x$ in the span of $\PG$, $\langle v^\ast,x\rangle>0$ if and only if $\langle e^\ast_L-e^\ast_R,x\rangle >0$.  By a case-by-case analysis, we conclude that the only possible edges in $G\setminus H$ are the ones of the form above.
\end{proof}

We observe that if $H$ is a facet subgraph and $H'$ is a bipartite subgraph of $H$ which is not a component of $G$, then let $H'=L\cup R$ be as in the definition of a facet subgraph.  Then, for each $e\in G$, $\langle e^\ast_L-e^\ast_R,\rho(e)\rangle=0,1,2$ and  every edge $e\in G\setminus H$ has a positive value.  If there is an edge taking on the value $1$, then $\langle e^\ast_L-e^\ast_R,x\rangle$ is the support form needed in Proposition \ref{prop:vitulli1}, and, otherwise, $\frac{1}{2}\langle e^\ast_L-e^\ast_R,x\rangle$ is the desired support form.  Therefore, as above, we focus on the third condition in Proposition \ref{prop:vitulli1}.

\begin{theorem}\label{thm:mixedSerres}
Let $G$ be a mixed signed, directed graph.
$k[G]$ satisfies Serre's $R_1$ condition if and only if every facet subgraph $H$ of $G$ satisfies the following inequality:
\[\comp(H)\leq \comp(G) +1.\]
\end{theorem}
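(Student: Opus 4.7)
The plan is to follow the proof architecture of Theorem~\ref{thm:signedSerres}, handling directed edges via the augmented signed graph $\widetilde{G}$. Proposition~\ref{prop:mixedConeFacets} identifies every facet $F$ of $\cone(\PG)$ with a facet subgraph $H$ and produces the candidate hyperplane $\mathcal{H}_F=\{x\in\R^n:\langle e_L^\ast-e_R^\ast,x\rangle=0\}$; the paragraph preceding the theorem statement already verifies that $\langle e_L^\ast-e_R^\ast,\cdot\rangle$ (or $\tfrac12\langle e_L^\ast-e_R^\ast,\cdot\rangle$ when every edge of $G\setminus H$ has both endpoints in $H'$) satisfies the first two conditions of Proposition~\ref{prop:vitulli1}. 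Hence it suffices to characterize exactly when the lattice condition
\[
\Z\rho(E(G))\cap\mathcal{H}_F=\Z\rho(E(H))
\]
holds for every facet subgraph $H$, and to show that this coincides with the inequality $\comp(H)\leq\comp(G)+1$.

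The first step is to prove a mixed analog of Lemma~\ref{lem:signedLattice}: for a connected mixed signed, directed graph $G$ on $n$ vertices, $a\in\Z^n$ lies in $\Z\rho(E(G))$ if and only if either $G$ is not bipartite and $\sum_i a_i\in 2\Z$, or $G$ is bipartite with $\widetilde{G}=\widetilde{L}\cup\widetilde{R}$ and $\sum_{i\in\pi(\widetilde{L})}a_i=\sum_{j\in\pi(\widetilde{R})}a_j$. The forward direction is an inspection of the three edge types, noting in particular that $\rho((i,j))=e_j-e_i$ has coordinate sum $0$ and that the endpoints of any directed edge lie on the same side of the bipartition when projected to non-artificial vertices. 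For the reverse direction, I would mimic the walk construction of Lemma~\ref{lem:signedLattice}, where each step along a walk contributes a $\pm 1$-weighted edge vector; since directed edges already contribute differences $e_{v_{k+1}}-e_{v_k}$ and their sign is independently toggleable, one can build $e_i+e_j$ along an odd-type walk and $e_i-e_j$ along an even-type walk, exactly as in the signed case. Alternatively, the lemma can be deduced from Lemma~\ref{lem:signedLattice} applied to $\widetilde{G}$ by projecting along artificial coordinates and using Corollary~\ref{cor:FF:Equality}.

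With the mixed lattice lemma in hand, the rest of the argument parallels Theorem~\ref{thm:signedSerres}. For the ``only if'' direction (contrapositive), if $\comp(H)>\comp(G)+1$ then some non-bipartite component of $G$ splits in $H$ into a bipartite component and at least two non-bipartite components; choose vertices $i,j$ in two distinct non-bipartite components of $H$ contained in the same non-bipartite component of $G$. The mixed lattice lemma gives $e_i+e_j\in\Z\rho(E(G))$, while $i,j\notin L\cup R$ forces $\langle e_L^\ast-e_R^\ast,e_i+e_j\rangle=0$, yet $e_i+e_j\notin\Z\rho(E(H))$ since $i,j$ lie in disjoint components of $H$, violating Proposition~\ref{prop:vitulli1}. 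For the ``if'' direction, use Observation~\ref{obs:bipartitecount} to reduce to a single altered component $G_1$ with resulting bipartite piece $H_1$ (and possibly a complementary piece $H_2$). Given $a\in\Z\rho(E(G))\cap\mathcal{H}_F$, the hyperplane equation yields $\sum_{i\in L}a_i=\sum_{j\in R}a_j$, so the mixed lemma places $\sum_{i\in H_1}a_ie_i\in\Z\rho(E(H_1))$; then show $a':=a-\sum_{i\in H_1}a_ie_i\in\Z\rho(E(H_2))$ by the mixed lemma applied to $H_2$, splitting into the bipartite and non-bipartite cases for $G_1$ exactly as in the proof of Theorem~\ref{thm:signedSerres}.

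The main obstacle will be the mixed lattice lemma: the walk-and-sign argument of Lemma~\ref{lem:signedLattice} relied on the fact that choosing $b_\ell=\pm 1$ for signed edges allows every alternation pattern, but directed edges do not obey the $b_\ell\sgn(\cdot)$ alternation scheme, so care is needed to verify that mixed walks still realize every $e_i\pm e_j$ permitted by the bipartition/parity condition. Once this is secured, the combinatorial bookkeeping (especially maintaining the correspondence $L=\pi(\widetilde{L}),R=\pi(\widetilde{R})$ in the presence of artificial vertices) is routine and proceeds by case analysis as in the signed setting.
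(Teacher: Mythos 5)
Your proposal is correct and follows essentially the same architecture as the paper's proof: the contrapositive argument via $e_i+e_j$ for $\comp(H)>\comp(G)+1$, and the reduction to one altered component with the $H_1$/$H_2$ case split for the converse. The only organizational difference is that the paper does not prove a standalone mixed lattice lemma by walk constructions; it instead performs the lattice bookkeeping inline by zero-extending $a$ to $\widetilde{a}$ on the artificial vertices, applying Lemma~\ref{lem:signedLattice} to $\widetilde{G}$, and transferring membership back via Corollary~\ref{cor:FF:Equality} --- precisely the alternative route you mention --- which sidesteps the alternation subtlety you flag as the main obstacle.
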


\begin{proof}
The proof is similar to the proof of Theorem \ref{thm:signedSerres}, so we leave some of the details to the interested reader.  We have already seen that for any facet subgraph $H$ of $G$, there is a linear form satisfying the first two conditions of Proposition \ref{prop:vitulli1}, so we focus on the third condition.

Suppose that $\comp(H)>\comp(G)+1$ and let $H_1$ and $H_2$ be two non-bipartite components in the same (non-bipartite) component $G_1$ of $G$.  Let $i\in H_1$ and $j\in H_2$.  Then, $\widetilde{e}_i+\widetilde{e}_j\in\Z\rho(E(\widetilde{G}))$ by Lemma \ref{lem:signedLattice}.  Then, by Corollary \ref{cor:FF:Equality}, since $\widetilde{e}_i+\widetilde{e}_j$ corresponds to the monomial $x^ix^j\in k(\widetilde{G})\cap k(x_1,\dots,x_n)$, $e_i+e_j\in \Z \rho(E(G))$.  For this element to be in $\Z\rho(E(H))$, both $e_i$ and $e_j$ would each need to be in $\Z\rho(E(H))$, but neither of these are in $\Z\rho(E(\widetilde{H}))$ by Lemma \ref{lem:signedLattice}.  Hence, $G$ fails the conditions in Proposition \ref{prop:vitulli1}, and, thus, does not satisfy Serre's $R_1$ condition.

Suppose now that $\comp(H)\leq\comp(G)+1$.  By following the proof of Theorem \ref{thm:signedSerres}, we choose $H_1$ and $G_1$ as above, i.e., $H_1$ is a bipartite component of $H$ which is not a component of $G$ and $G_1$ is the component of $G$ containing $H_1$.  As above, we restrict our attention to the case where $G$ consists of a single component, i.e., $G=G_1$.  In what follows, we assume that $G$ has $m$ vertices and $m'$ directed edges.

Since $H_1$ is bipartite, let $H_1=L_1\cup R_1$ be a partition of $H_1$ coming from a bipartition of the augmented graph $\widetilde{H}_1=\widetilde{L}_1\cup\widetilde{R}_1$.  We recall, from the proof of Proposition \ref{prop:mixedConeFacets}, that the facet $\cone(\mathcal{P}_H)$ is formed by the intersection of $\cone(\PG)$ with the hyperplane $\mathcal{H}=\{x\in\mathbb{R}^m:\langle e^\ast_{L_1}-e^\ast_{R_1},x\rangle=0\}$.

Let $a\in\Z\rho(E(G))\cap\mathcal{H}\subseteq\Z^m$, and define $\widetilde{a}\in\Z^{m+m'}$ where $(\widetilde{a})_i=a_i$ for $i\in G$ and $(\widetilde{a})_{t_i}=0$ for artificial vertices.  By Corollary \ref{cor:FF:Equality}, $\widetilde{a}\in\Z\rho(E(\widetilde{G}))$ since $a$ and $\widetilde{a}$ correspond to the same element in $k(G)$.  In order to confirm the third condition of Proposition \ref{prop:vitulli1}, we must show that $a\in\Z\rho(E(H))$, or, equivalently, by Corollary \ref{cor:FF:Equality}, that $\widetilde{a}\in\Z\rho(E(\widetilde{H}))$.  Since $a\in\mathcal{H}$, we know that 
\begin{equation}\label{eq:balancingR1:mixed}
\sum_{i\in L_1}a_i=\sum_{j\in R_1}a_j
\end{equation}
since the weights on $L_1$ and $R_1$ must balance for the inner product above to vanish.  Moreover, since all artificial vertices have weight zero, Equation (\ref{eq:balancingR1:mixed}) implies that 
\begin{equation*}
\sum_{i\in \widetilde{L}_1}\widetilde{a}_i=\sum_{j\in \widetilde{R}_1}\widetilde{a}_j.
\end{equation*}
Since $\widetilde{H}_1$ is bipartite, by Lemma \ref{lem:signedLattice}, 
$$\sum_{i\in \widetilde{H}_1}\widetilde{a}_i\widetilde{e}_i\in\Z\rho(E(\widetilde{H}_1)).$$
Since the weights on the artificial vertices are zero, by Corollary \ref{cor:FF:Equality}, it follows that 
$$\sum_{i\in H_1}a_ie_i\in\Z\rho(E(H_1))\subseteq\Z\rho(E(H)).$$
Therefore, our goal is to show that the remaining part of $a$ is also in $\Z\rho(E(H))$, i.e., we must show that 
\begin{equation}\label{eq:mixed:difference}
a':=a-\sum_{i\in H_1}a_ie_i\in\Z\rho(E(H_2)).
\end{equation}
We define $\widetilde{a}'$ by extending $a'$ to $\Z\rho(E(G))$ as above.

By adapting Observation \ref{obs:bipartitecount} to the mixed signed, directed case, we consider three cases: If $\comp(H)=\comp(G)$, then $G$ and $H_1$ have the same vertices and $a'$ is zero.  If $\comp(H)=\comp(G)+1$, then $H$ consists of two components, let $H_2$ be the complementary component to $H_1$.  We observe that the expression in Equation (\ref{eq:mixed:difference}) can be rewritten as
$$
a'=\sum_{j\in H_2}a_je_j.
$$
We now proceed by considering two cases, depending on whether $G$ is bipartite or not.

Assume, first, that $G_1$ is bipartite, then there is a partition $G=L\cup R$ coming from a bipartition of the augmented graph $\widetilde{G}=\widetilde{L}\cup\widetilde{R}$ so that $L_1=L\cap H_1$ and $R_1=R\cap H_1$.  Moreover, since $G$ is bipartite, it follows that $H_2$ is bipartite.  Let $L_2=L\cap H_2$ and $R_2=R\cap H_2$ be a partition of $H_2$ coming from a bipartition $\widetilde{L}_2=\widetilde{L}\cap\widetilde{H}_2$ and $\widetilde{R}_2=\widetilde{R}\cap\widetilde{H}_2$ of $\widetilde{H_2}$.  By Lemma \ref{lem:signedLattice}, since $\widetilde{G}$ is bipartite, it follows that
$$
\sum_{i\in\widetilde{L}}\widetilde{a}_i=\sum_{j\in\widetilde{R}}\widetilde{a}_j.
$$
By appealing to Equation (\ref{eq:balancingR1:mixed}), it follows that 
$$
\sum_{i\in\widetilde{L}_2}\widetilde{a}_i=\sum_{j\in\widetilde{R}_2}\widetilde{a}_j.
$$
Since $H_2$ is bipartite, by Lemma \ref{lem:signedLattice} we can conclude that $\widetilde{a}'\in\Z\rho(E(\widetilde{H}_2))$.  Since the weights on the artificial vertices are zero, by Corollary \ref{cor:FF:Equality}, it follows that $a'\in\Z\rho(E(H_2))$.

On the other hand, if $G$ is not bipartite, then $H_2$ is not bipartite since otherwise $H$ has too many bipartite components.  Since $\widetilde{G}$ is not bipartite, by Lemma \ref{lem:signedLattice},
$$
\sum_{j\in\widetilde{H}_2}\widetilde{a}_j
$$
is even.  By Lemma \ref{lem:signedLattice}, since $\widetilde{H}_2$ is not bipartite, this implies that $\widetilde{a}'\in\Z\rho(E(\widetilde{H}_2))$.  Since the weights on the artificial vertices are zero, by Corollary \ref{cor:FF:Equality}, it follows that $a'\in\Z\rho(E(H_2))$.

Since, in either case, we find that $\Z\rho(E(G))\cap\mathcal{H}=\Z\rho(E(H_1))\oplus\Z\rho(E(H_2))$ and the third condition of Proposition\ref{prop:vitulli1} holds.  Therefore, $k[G]$ satisfies the conditions in Proposition \ref{prop:vitulli1}, and, thus, satisfies Serre's $R_1$ condition.
\end{proof}

We end this section with a direct corollary of the theorem.

\begin{corollary}\label{cor:mixedSerres}
Let $G$ be a mixed signed, directed graph.
$k[G]$ satisfies Serre's $R_1$ condition if and only if $k[\widetilde{G}]$ satisfies Serre's $R_1$ condition.
\end{corollary}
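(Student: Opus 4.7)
The plan is to invoke Theorem \ref{thm:mixedSerres} for $k[G]$ and Theorem \ref{thm:signedSerres} for $k[\widetilde{G}]$, and then show that the two combinatorial inequalities on facet subgraphs are equivalent. Since Definition \ref{def:MixedComponent} gives $\comp(G) = \comp(\widetilde{G})$, the right-hand bounds $\comp(G)+1$ and $\comp(\widetilde{G})+1$ coincide, and only a correspondence between facet subgraphs preserving component counts is needed.

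First, I would construct a forward map sending each facet subgraph $H$ of $G$ to a facet subgraph $\widehat{H}$ of $\widetilde{G}$ with $\comp(\widehat{H}) = \comp(H)$. Starting from the augmentation $\widetilde{H}$ of $H$, for each directed edge $(i,j) \in G \setminus H$ I would attach the artificial vertex $t_{(i,j)}$ to the new bipartite component of $\widehat{H}$ by including exactly one of the two edges $-it_{(i,j)}$ or $+t_{(i,j)}j$, chosen according to which directed-edge form in Definition \ref{def:facetsubgraphdirected} the edge $(i,j)$ satisfies (include $+t_{(i,j)}j$ when $j \in L$, placing $t_{(i,j)}$ in $\widetilde{R}$; otherwise include $-it_{(i,j)}$, placing $t_{(i,j)}$ in $\widetilde{L}$). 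A case check mirroring Proposition \ref{prop:mixedConeFacets} confirms that $\widehat{H}$ is a facet subgraph of $\widetilde{G}$ with this extended bipartition. Since each newly attached artificial vertex is a pendant of an existing component, $\comp(\widehat{H}) = \comp(H)$.

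Second, for the reverse map, given a facet subgraph $\widetilde{H}$ of $\widetilde{G}$ with bipartition $(\widetilde{L},\widetilde{R})$ of its new bipartite component, I would define $H = \pi(\widetilde{H})$ as the mixed subgraph of $G$ containing each signed edge of $\widetilde{H}$ together with each directed edge $(i,j)$ whose full augmentation lies in $\widetilde{H}$. A case analysis split according to whether neither, exactly one, or both augmentation edges of each $(i,j)$ lie in $\widetilde{H}$, combined with the projected bipartition $(L,R) = (\pi(\widetilde{L}), \pi(\widetilde{R}))$, would show that $H$ is a facet subgraph of $G$ with $\comp(H) = \comp(\widetilde{H})$ (via Definition \ref{def:MixedComponent}).

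Combining the two constructions yields equivalence of the combinatorial inequalities, and the corollary follows. The main obstacle I expect is the ``exactly one'' case in the reverse map: when only one of $-it_{(i,j)}$, $+t_{(i,j)}j$ lies in $\widetilde{H}$, I need to show that the presence of the remaining augmentation edge in $\widetilde{G} \setminus \widetilde{H}$ together with its required form in Definition \ref{def:facetsubgraphsigned} forces $(i,j)$ into one of the directed-edge forms of Definition \ref{def:facetsubgraphdirected}, which requires careful bookkeeping of the sign of the missing edge and the bipartition side of the endpoint it would connect.
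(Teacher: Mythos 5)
Your proposal is correct and follows the route the paper intends: the paper offers no written proof, presenting the statement as an immediate consequence of Theorems \ref{thm:signedSerres} and \ref{thm:mixedSerres} together with $\comp(G)=\comp(\widetilde{G})$ (Definition \ref{def:MixedComponent}) and a component-count-preserving correspondence between facet subgraphs of $G$ and of $\widetilde{G}$, which is exactly what you construct. The one case you should close explicitly is the ``neither augmentation edge in $\widetilde{H}$'' branch of your reverse map: it cannot occur, since an isolated artificial vertex would be a bipartite component of $\widetilde{H}$ that is not a component of $\widetilde{G}$ and cannot satisfy condition (2) of Definition \ref{def:facetsubgraphsigned} (whichever side of its trivial bipartition contains $t_{(i,j)}$, one of the two removed incident edges has the wrong sign), so $\comp(H)=\comp(\widetilde{H})$ indeed holds.
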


\subsection{Example}

Recall, from Section \ref{sec:SerreConditions}, that (1) a ring is Cohen-Macaulay if and only if it satisfies $S_\ell$ for all $\ell$, (2) a Noetherian ring is normal if and only if it satisfies $R_1$ and $S_2$, and (3) normal semigroup rings over a field are Cohen-Macaulay.  Using these facts, we have the following direct observation which allows us to combinatorially construct rings which are not Cohen-Macaulay.

\begin{observation}\label{obs:NormAndCM}
Suppose that $G$ is a mixed signed, directed graph such that $k[G]$ is not normal, but satisfies $R_1$.  Then, $k[G]$ must fail $S_2$, so it cannot be Cohen-Macaulay.
\end{observation}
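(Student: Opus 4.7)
The plan is to observe that this is a direct logical consequence of the three facts recalled immediately before the statement, so the proof amounts to chaining two contrapositives. No new geometry or combinatorics on $G$ is needed; in particular, Theorems~\ref{thm:signedSerres} and \ref{thm:mixedSerres} are not invoked here — those are only used when one wants to actually verify in examples that a given $G$ satisfies the hypotheses of the observation.

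First, I would invoke the characterization that a Noetherian ring is normal if and only if it satisfies both $R_1$ and $S_2$ (from, e.g., \cite[Theorem~5.8.6]{Grothendieck:1965}). Since $k[G]$ is Noetherian (being a finitely generated $k$-algebra), and by hypothesis $k[G]$ satisfies $R_1$ but is not normal, the contrapositive of this characterization forces $k[G]$ to fail $S_2$. This is the main content of the observation.

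Second, I would appeal to the fact that a Cohen-Macaulay ring satisfies $S_\ell$ for every $\ell$; in particular, Cohen-Macaulay implies $S_2$. Taking the contrapositive, any ring failing $S_2$ cannot be Cohen-Macaulay. Combining with the previous paragraph, $k[G]$ is not Cohen-Macaulay, completing the proof.

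There is no serious obstacle: the only thing to double-check is that $k[G]$ is genuinely Noetherian so that the $R_1+S_2 \Longleftrightarrow$ normal criterion applies, which holds because $k[G]$ is a finitely generated $k$-subalgebra of a Laurent polynomial ring. The observation's interest lies not in its proof but in its use: combined with the normality criterion from \cite{LipmanBurr:2016} and the $R_1$ criterion of Theorem~\ref{thm:mixedSerres}, it gives a purely combinatorial recipe for producing non-Cohen-Macaulay edge rings, which is the payoff the authors advertise in the introduction.
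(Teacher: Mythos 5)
Your proposal is correct and matches the paper's intent exactly: the paper states this as a ``direct observation'' following the three recalled facts, and your two contrapositives (Serre's normality criterion $R_1+S_2$ for Noetherian rings, and Cohen-Macaulay $\Rightarrow S_\ell$ for all $\ell$) are precisely the intended argument. Nothing further is needed.
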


Before providing an example, we recall the following result from \cite{LipmanBurr:2016}, which characterizes, combinatorially which mixed signed, directed graphs have normal edge rings.

\begin{proposition}[see {\cite[Theorem 5]{LipmanBurr:2016}}]\label{prop:Normality}
Let $G$ be a mixed signed, directed graph.  $k[G]$ is normal if and only if $G$ satisfies the generalized odd cycle condition, i.e., for any two disjoint cycles, each with an odd number of signed edges, between them there is either no path or a generalized alternating path.
\end{proposition}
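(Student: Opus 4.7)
The plan is to invoke the standard characterization that an affine semigroup ring $k[C]$ is normal if and only if its defining semigroup is saturated, that is $C = \Z C \cap \cone(C)$. So the task reduces to showing combinatorially that every $a \in \Z\rho(E(G)) \cap \cone(\rho(E(G)))$ decomposes as a nonnegative integer combination of the $\rho(e)$, and conversely that a violation of the generalized odd cycle condition produces an $a$ which fails such a decomposition. I would first reduce to signed graphs by passing to the augmented graph $\widetilde{G}$ of Definition \ref{def:AugmentedGraph}: directed edges of $G$ become pairs of signed edges through artificial vertices in $\widetilde{G}$, and generalized alternating paths in $G$ correspond to ordinary alternating paths in $\widetilde{G}$. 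Corollary \ref{cor:FF:Equality} and its extension to integer semigroups lets us transfer the saturation question from $G$ to $\widetilde{G}$, after which Lemma \ref{lem:signedLattice} and the facet description of Proposition \ref{prop:signedConeFacets} control both the lattice and the cone.

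For the forward direction I argue the contrapositive: given two disjoint cycles $C_1, C_2$ whose signed edges are odd in number, joined by a path $P$ that is not generalized alternating, consider
$$a = \tfrac{1}{2}\sum_{e\in C_1}\rho(e) + \tfrac{1}{2}\sum_{e\in C_2}\rho(e).$$
By Lemma \ref{lem:signedLattice} applied after lifting to $\widetilde{G}$, the vector $a$ is integral and lies in $\Z\rho(E(G))$; it is visibly a nonnegative rational combination of edges, hence in the cone. I then show $a$ cannot be expressed as a nonnegative integer combination of the $\rho(e)$: any such expression would have to involve bridging between $C_1$ and $C_2$, and a careful parity/sign check along any bridging walk in $G$ shows that only a generalized alternating path can reconcile signs and directions consistently, contradicting the hypothesis.

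For the converse, assume the generalized odd cycle condition and let $a \in \Z\rho(E(G)) \cap \cone(\rho(E(G)))$. I proceed by induction on $\|a\|_1$: greedily subtract off $\rho(e)$ for any edge $e$ whose removal keeps the remainder in the cone. The residual vector, by the parity constraints of Lemma \ref{lem:signedLattice}, must (if nonzero) decompose as a sum of half-cycle contributions from odd cycles living in the relevant components. The generalized odd cycle condition then guarantees that any two such odd cycles in the same component of $G$ are joined by a generalized alternating path, and I use that path to rewrite the paired half-cycle contributions as a single honest nonnegative integer combination of edges, completing the induction. The main obstacle will be this final step: rigorously verifying that a generalized alternating path converts two odd cycle defects into a bona fide positive integer combination of edges requires careful bookkeeping of multiplicities along the path, with the sign and orientation data on each edge of $P$ chosen so that the odd-cycle parities at the two endpoints telescope. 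This combinatorial balancing, enabled by the precise definition of generalized alternating, is the crux of the theorem.
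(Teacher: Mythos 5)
First, a point of comparison: the paper does not prove this proposition at all --- it is quoted from \cite[Theorem~5]{LipmanBurr:2016}, so there is no in-paper argument to measure your attempt against. Taken on its own terms, your overall strategy is the standard and correct framework for results of this type: reduce normality of the affine semigroup ring to saturation of the semigroup, pass to the augmented signed graph $\widetilde{G}$ so that directed edges become length-two signed paths through artificial vertices, and locate the obstruction to saturation in vectors of the form $\tfrac12\sum_{e\in C_1}\rho(e)+\tfrac12\sum_{e\in C_2}\rho(e)$ for two odd cycles. The lattice membership of that vector via Lemma~\ref{lem:signedLattice} (coordinate sums of the two half-cycles are each odd, so their sum is even) and its cone membership (it is by construction a nonnegative rational combination of the $\rho(e)$) are both fine.

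The gap is that you have deferred exactly the content of the theorem. You never state what a generalized alternating path is, and both directions of your argument bottom out in an appeal to ``a careful parity/sign check'' or ``careful bookkeeping of multiplicities'' along that path --- which is precisely the step that separates this statement from a formality. In the forward direction you must show that when the connecting path is \emph{not} generalized alternating, \emph{no} expression of $a$ as a nonnegative integer combination of edges exists; ruling out every possible bridging configuration (walks that reuse edges, pass through either cycle several times, or detour through other parts of $G$) is a genuine argument, not an aside. In the converse, your induction rests on a second unproved assertion: that after greedy subtraction of edges the residual vector decomposes as a sum of half-odd-cycle contributions. In the classical all-positive-edge setting this is itself a theorem (the explicit description of the integral closure of $k[G]$ due to Ohsugi--Hibi and Simis--Vasconcelos--Villarreal), and in the signed/directed setting it needs an analogue of the cone and lattice analysis of Sections~\ref{sec:Geometry} and~\ref{sec:mixedSerres}; it does not follow from parity considerations alone. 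As written, the proposal is a plausible plan with its two crucial lemmas left as placeholders, so it cannot be accepted as a proof.
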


\begin{example}\label{ex:R1NotNormD} 
Consider the signed graph $G$ in Figure \ref{fig:R1NotNormD}, i.e., $G$ has vertex set $\{1,2,3,4\}$ and edge set $\{+11,-12,+13,(2,3),-24,+34,+44\}$, .  Se observe that $k[G]$ is not normal since there is not a generalized alternating path between the loops at $1$ and $4$.  On the other hand, we see that $k[G]$ satisfies Serre's $R_1$ condition as follows:

Suppose that $H$ is a facet subgraph of $G$ such that $\comp(H)>\comp(G)+1$.  Since $G$ has one component and $G$ is not bipartite, then $H$ must have exactly one bipartite component and at least two non-bipartite components.  Since there are only two cycles with an odd number of signed edges, i.e., the loops at $1$ and $4$, these two vertices must be in the two non-bipartite components.  If $2$ is the new bipartite component, then $1$ and $4$ are in the same component as $3$ connects them.  Similarly, if $3$ is the new bipartite component, then $1$ and $4$ are connected via $2$.  Therefore, both $2$ and $3$ must be in the bipartite component.  Since $2$ has negative incident edges, $2\in R$, and, similarly, $3\in L$.  However, the edge $(2,3)$ implies that both $2$ and $3$ are both in $\widetilde{L}$ or $\widetilde{R}$ in $\widetilde{H}$.  This contradicts the definition of a facet subgraph.  Thus, there are no facet subgraphs failing the inequality given in Theorem \ref{thm:mixedSerres}, and $k[G]$ satisfies Serre's $R_1$ condition.  Therefore, by observation \ref{obs:NormAndCM}, it follows that $k[G]$ must fail $S_2$, and, hence is not Cohen-Macaulay.
\end{example}

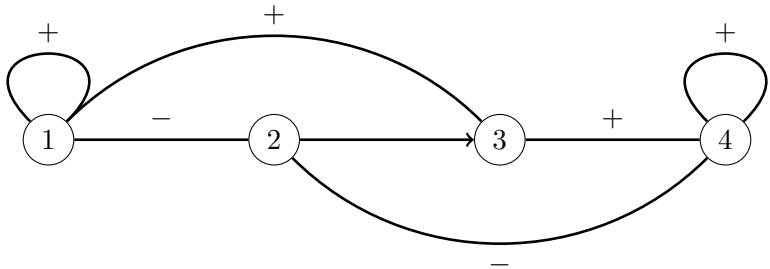
\begin{figure}[hbt]
	\centering
\begin{tikzpicture}
\node [draw,circle] (A) at (0,0) {1};
\node [draw,circle] (B) at (3,0) {2};
\node [draw,circle] (C) at (6,0) {3};
\node [draw,circle] (D) at (9,0) {4};
\draw[line width=1pt] (A) edge[out=45,in=135,looseness=9] node[above]{$+$} (A);
\draw[line width=1pt] (D) edge[out=45,in=135,looseness=9] node[above]{$+$} (D);
\draw[line width=1pt] (A) edge node[above]{$-$} (B);
\draw[->,line width=1pt] (B) -- (C);
\draw[line width=1pt] (C) edge node[above]{$+$} (D);
\draw[line width=1pt] (A) edge[out=45,in=135,looseness=1] node[above]{$+$} (C);
\draw[line width=1pt] (B) edge[out=-45,in=-135,looseness=1] node[below]{$-$} (D);
\end{tikzpicture}
	\caption{The edge ring for this mixed signed, directed graph satisfies $R_1$, but it fails to be normal.  Therefore, the edge ring fails condition $S_2$.  Thus, this is an example of a graph whose edge ring is not Cohen-Macaulay, by Observation \ref{obs:NormAndCM}.}
	\label{fig:R1NotNormD}
\end{figure}

\begin{remark}
The graph in Example \ref{ex:R1NotNormD} is a minimal example of an edge ring which satisfies $R_1$ and not $S_2$ in the following sense:  Any graph which has fewer vertices either fails $R_1$ or its edge ring is normal and so the edge ring satisfies $S_2$. 
\end{remark}

\section{Conclusion}\label{sec:Conclusion}

In this paper, we have extended the characterization of Serre's $R_1$ condition for edge rings presented by Hibi and Katth\"an \cite{HibiKattan:2014} to arbitrary mixed signed, directed graphs.  This results in an explicit complete characterization of Serre's $R_1$ condition for quadratic-monomial generated polynomial rings in terms of the combinatorial properties of the associated graphs.  The proofs presented in this paper are new and have additional subtleties than in \cite{HibiKattan:2014} due to the possibility of cancellation between terms.  This characterization of $R_1$ allows us to provide an example of a quadratically generated polynomial ring which is not Cohen-Macaulay.

\section*{Acknowledgements}\label{sec:Acknowledgements}
The authors would like to thank their colleague, Sean Sather-Wagstaff for helpful feedback on this project.

\bibliography{bibliographyNormalDomains}
\bibliographystyle{spmpsci}

\end{document}